\numberwithin{equation}{section}
\newtheorem{theorem}{Theorem}[section]
\newtheorem{lemma}[theorem]{Lemma}
\newtheorem{corollary}[theorem]{Corollary}
\newtheorem{problem}[theorem]{Problem}
\newtheorem{proposition}[theorem]{Proposition}
\newtheorem{claim}{Claim}[theorem]
\theoremstyle{definition}
\newtheorem{definition}[theorem]{Definition}
\theoremstyle{remark}
\newtheorem{remark}{Remark}  
\newcommand{\itemprefix}{}
\newcommand{\myitem}{%
\item\protected@edef\@currentlabel{\itemprefix\theenumi}%
}
 \newcommand{\dom}{\operatorname{dom}}
\newcommand{\supp}{\operatorname{supp}}
\newcommand{\val}{\operatorname{val}}
\author[A. Dow]{Alan Dow}
\address{Department of Mathematics and Statistics,
University of North Carolina at Charlotte,
 Charlotte, NC 28223, USA}
\email{adow@charlotte.edu}
\author[I. Juh\'asz]{Istv\'an Juh\'asz}
\address      {HUN-REN Alfr\'ed Rényi Institute of Mathematics}
\email{juhasz@renyi.hu}
\subjclass[2010]{54A25, 54A35, 54D10, 03E04}
\keywords{countable spread, hereditarily Lindel\"of, weight, partition relation}
\title{All spaces of countable spread can be small}
\date{\today}
\begin{document}

\begin{abstract}
The main result of this paper is the proof of the simultaneous consistency, modulo a weakly compact cardinal,
of the equality $2^{< \mathfrak{c}} = \mathfrak{c}$ with the following property (*) of partitions of pairs of $\mathfrak{c}$:

\smallskip

(*) For any
coloring (or partition) $k : [\mathfrak{c}]^2 \rightarrow 2$ either there is a homogeneous set
of size $\mathfrak{c}$ in color $0$ or there is a set $S \in [\mathfrak{c}]^\mathfrak{c}$
such that for every countable $A \subset S$ there is $\beta \in \mathfrak{c}$ for which $A \subset \beta$
and $k(\{\alpha, \beta\}) = 1$ for all $\alpha \in A$.

\smallskip

(*) plus $2^{< \mathfrak{c}} = \mathfrak{c}$ together then imply that for every topological space $X$ of countable
  spread, i.e. not containing any uncountable discrete subset,
   $|X| \le \mathfrak{c}$ if it is
  Hausdorff and
  $o(X) = \mathfrak c$ if it is also infinite and regular. Here $o(X)$ denotes the number of all open subsets of $X$.
\end{abstract}

\maketitle
\section{Introduction}
We recall that the spread $s(X)$ of a topological space is the supremum of the cardinalities
of all discrete subspaces of $X$. Hajnal and Juhász proved in \cite{MR0229195} that $|X| \le 2^{2^{s(X)}}$
for any Hausdorff space $X$. In particular, $|X| \le 2^\mathfrak{c}$ whenever $X$ has countable spread.
It is also known that any {\em regular} space $X$ of countable spread has a network of size $\le \mathfrak{c}$, and
that clearly implies $$w(X) \le o(X) \le 2^\mathfrak{c}.$$

In \cite{MR0852486} Juhász and Shelah proved that it is consistent with both $\mathfrak{c}$ and $2^\mathfrak{c}$
being arbitrarily large, independently of each other, that there are 0-dimensional Hausdorff, hence regular
hereditary separable (resp. hereditary Lindelöf) spaces of cardinality (resp. weight) $2^\mathfrak{c}$ (see also the last section of this paper).
This shows that, at least up to consistency, the above inequalities are sharp.

In the opposite direction, Todor\v cevi\' c proved in \cite{TodorcevicTAMS} that it is consistent to have
$|X| \le \mathfrak{c}$ for any Hausdorff  space $X$ of countable spread. But as far as we know, the analogous
result for the second inequality has not been known. In particular, it was unknown if it is consistent
to have $w(X) \le \mathfrak{c}$ for any regular hereditary Lindelöf space $X$.
(Note that if $X$ is hereditary Lindelöf then $w(X) \le \mathfrak{c}$ clearly implies $o(X)\le \mathfrak{c}$.)
Actually, this
question was the original motivation for our present work.

Our main result says that, at least modulo the existence of a weakly compact cardinal,
we do get the consistency of $o(X) = \mathfrak{c}$ for any infinite regular space $X$
of countable spread together with $|X| \le \mathfrak{c}$
for any Hausdorff  space $X$ of countable spread. This yields a counterpoint of the results in \cite{MR0852486} and
justifies the title of our paper.

Our notation and terminology follow those in \cite{MR0576927}. In what follows, {\em space} will always
mean an infinite Hausdorff space.

\section{A partition relation implying that spaces of countable spread are small}

We start by introducing a partition relation that will be used to get what we want and
whose consistency will be established later.
\begin{definition}\label{firstversion}
(i) If $A$ and $B$ are disjoint sets of ordinals then $$(A;B) = \big \{\{\alpha, \beta\} : \alpha \in A,\,\beta \in B \text{ and } \alpha < \beta \big \}.$$

(ii) An {\em $(\omega_1;\omega_1)$-type half graph} is a set of
pairs of the form $(A;B)$ where $A$ and $B$ are disjoint sets of ordinals
of order type $\omega_1$ such that $\sup A = \sup B$.

(iii) $\kappa \rightarrow (\omega_1, (\omega_1;\omega_1))$ denotes the statement that for any
coloring (or partition) $k : [\kappa]^2 \rightarrow 2$ either there is a homogeneous subset of $\kappa$
of size $\omega_1$ in color $0$ or there is an $(\omega_1;\omega_1)$-type half graph homogeneous in color $1$.
\end{definition}

It is well-known that a space $X$ is hereditary separable (resp. hereditary Lindelöf) iff
every left separated (resp. right separated) subspace of $X$ is countable. We shall need
a variation of these concepts that, however, for regular spaces coincide with the old ones.

\begin{definition}
The space $X$ is {\em strongly left separated} (resp. {\em strongly right separated}) if it has
a well-ordering $\prec$ such that every point $x \in X$ has a {\em closed neighborhood} $W_x$ such
that $y \notin W_x$ whenever $y \prec x$ (resp. $x \prec y$).

We say that $X$ is {\em strongly separated} if it is strongly left separated or strongly right separated.
\end{definition}

Clearly, every strongly separated space is automatically Hausdorff.

\begin{proposition}\label{pr:arrow}
The principle
$\kappa \rightarrow (\omega_1, (\omega_1;\omega_1))$
  implies that  every strongly separated space of cardinality
$\kappa$ has an uncountable
 discrete subspace.
\end{proposition}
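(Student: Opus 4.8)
The plan is to reduce discreteness to the combinatorial dichotomy supplied by $\kappa \rightarrow (\omega_1, (\omega_1;\omega_1))$ by means of a single coloring. Assume first that $X$ is strongly left separated, witnessed by a well-ordering $\prec$, and enumerate $X = \{x_\xi : \xi < \kappa\}$ so that $x_\xi \prec x_\eta$ iff $\xi < \eta$. For each $\xi$ fix a closed neighborhood $W_\xi$ of $x_\xi$ with $x_\eta \notin W_\xi$ whenever $\eta < \xi$, and write $U_\xi = \operatorname{int} W_\xi$, an open neighborhood of $x_\xi$. I would then define $k : [\kappa]^2 \to 2$ by declaring, for $\xi < \eta$, that $k(\{\xi,\eta\}) = 1$ if $x_\eta \in U_\xi$ and $k(\{\xi,\eta\}) = 0$ otherwise. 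The whole proof consists of checking that each of the two alternatives of the partition relation yields an uncountable discrete subspace.

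The color-$0$ alternative is immediate. If $H \in [\kappa]^{\omega_1}$ is homogeneous in color $0$, then for a fixed $\xi \in H$ the neighborhood $U_\xi$ excludes every later point $x_\eta$ (that is exactly $k(\{\xi,\eta\}) = 0$) and excludes every earlier point $x_\zeta$ (since $x_\zeta \notin W_\xi \supseteq U_\xi$ by strong left separation). Hence $U_\xi \cap \{x_\eta : \eta \in H\} = \{x_\xi\}$, and $\{x_\xi : \xi \in H\}$ is an uncountable discrete subspace.

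The color-$1$ alternative is where the real work lies, and I expect it to be the main obstacle. Here we are handed an $(\omega_1;\omega_1)$-type half graph $(A;B)$ homogeneous in color $1$, with $A, B$ of order type $\omega_1$ and $\sup A = \sup B = \lambda$, so $\operatorname{cf}(\lambda) = \omega_1$ and $x_\beta \in U_\alpha$ for all $\alpha \in A$, $\beta \in B$ with $\alpha < \beta$. Using $\sup A = \sup B = \lambda$ I would first interleave the sets: by recursion construct strictly increasing sequences $(\alpha_\nu)_{\nu<\omega_1}$ in $A$ and $(\beta_\nu)_{\nu<\omega_1}$ in $B$ with $\alpha_0 < \beta_0 < \alpha_1 < \beta_1 < \cdots$, which is possible because at each countable stage the ordinals used so far have supremum below $\lambda$. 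The candidate discrete set is $D = \{x_{\beta_\nu} : \nu < \omega_1\}$. For a fixed $\nu$, the earlier points $x_{\beta_\mu}$ with $\mu < \nu$ are excluded from $U_{\beta_\nu}$ by strong left separation; the decisive point is that the entire tail of later points is trapped in one closed set: since $\alpha_{\nu+1} < \beta_{\nu+1} \le \beta_\mu$ for every $\mu > \nu$, homogeneity in color $1$ gives $x_{\beta_\mu} \in U_{\alpha_{\nu+1}} \subseteq W_{\alpha_{\nu+1}}$, while $\beta_\nu < \alpha_{\nu+1}$ yields $x_{\beta_\nu} \notin W_{\alpha_{\nu+1}}$. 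Consequently $U_{\beta_\nu} \cap (X \setminus W_{\alpha_{\nu+1}})$ is an open neighborhood of $x_{\beta_\nu}$ meeting $D$ only in $x_{\beta_\nu}$, so $D$ is an uncountable discrete subspace. The two key ideas are that \emph{closedness} of $W_{\alpha_{\nu+1}}$ turns the half-graph incidences into a single open set that peels off all the later $\beta$-points at once, and that $\sup A = \sup B$ is precisely what makes the interleaving available.

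Finally, the strongly right separated case is treated by the mirror-image coloring, $k(\{\xi,\eta\}) = 1$ iff $x_\xi \in U_\eta$ for $\xi < \eta$; now the closed neighborhoods look backward, and in the half-graph alternative the $A$-points are captured into initial segments rather than tails. The only extra care is that initial segments have no largest element, so I would take the discrete set to be $\{x_{\alpha_\nu} : \nu \text{ a successor}\}$, separating $x_{\alpha_\nu}$ from the later $A$-points by $U_{\alpha_\nu}$ and from the earlier ones by $X \setminus W_{\beta_{\nu-1}}$. This set is still uncountable, and the remainder of the argument is symmetric to the left separated case.
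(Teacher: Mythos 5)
Your proof is correct and follows essentially the same route as the paper: the same coloring (up to the immaterial substitution of $\operatorname{int}W_\xi$ for $W_\xi$), the immediate treatment of the color-$0$ alternative, and in the half-graph alternative the same interleaving of $A$ and $B$ to isolate each $x_{\beta_\nu}$ (resp.\ $x_{\alpha_\nu}$) by an open set of the form $U \setminus W$. The paper phrases the conclusion as the sequence being a free sequence, but the separation you exhibit is exactly that structure, so the arguments coincide in substance.
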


\begin{proof}
(i) Assume we have a strongly left separated
topology on $\kappa$. We may assume that the
strongly left separating well-ordering
is the standard ordering of $\kappa$.
For each $\xi\in \kappa$,
choose a closed neighborhood $W_\xi$ of $\xi$
that is contained in $\kappa  \setminus \xi$.
Define the coloring $k : [\kappa]^2 \rightarrow 2$ according to $
k(\{\xi,\eta\}) = 1$ providing $\eta\in W_\xi \setminus \{\xi\} $.

Naturally, if $I\subset \kappa$ is homogeneous of color $0$
  then $I$ is discrete.

  So assume that $A = \{ \alpha_\xi : \xi<\omega_1\}$
  and $B = \{\beta_\xi : \xi < \omega_1\}$,
  indexed in their increasing ordering,
  are subsets of
  $\kappa$ such that $(A;B) \subset k^{-1}(1)$ forms an $(\omega_1;\omega_1)$-type half graph.
  By thinning out $B$ if necessary, we may assume that
  for each $\xi\leq \eta\in \omega_1$ we have
$\{\alpha_\xi, \beta_\eta\} \in k^{-1}(1)$.
We check that then $\{\beta_{\xi} : \xi\in \omega_1\}$ is
a free sequence in $X$.

Fix any $\delta\in\omega_1$,
and let $\gamma_\delta=\sup\{\beta_\xi : \xi < \delta\}$.
Since $X$ is left separated, the closure of
 $\{\beta_\xi : \xi<\delta\} $ is a subset of $\gamma_\delta$.
 The closure of $\{\beta_\xi : \delta\leq \xi\}$ is a subset
 of the closed set $W_{\alpha_\delta}$ which is contained
 in $[\alpha_\delta,\kappa)$. Since $\gamma_\delta\leq \alpha_\delta$,
  the sequence $\{ \beta_\xi : \xi < \omega_1\}$ is a free sequence.

(ii) Now assume that we have a strongly right separated
topology on $\kappa$.
Fix for each $\eta \in \kappa$ a closed neighborhood
$W_\eta \subset [0,\eta]$.
Again let $k : [\kappa]^2 \rightarrow 2$ be defined by
 $k(\{\xi,\eta\}) = 1$ iff $\xi<\eta$ and
$\xi\in W_\eta$.

If $I \subset \kappa$ is uncountable with $[I]^2 \subset k^{-1}(0)$
then again $I$ is discrete. Otherwise we may
 assume that there are $A = \{ \alpha_\xi : \xi<\omega_1\}$
  and $B = \{\beta_\xi : \xi < \omega_1\}$ subsets of
  $\kappa$ satisfying that, for each $\xi\leq \eta\in \omega_1$ we have
$\{\alpha_\xi, \beta_\eta\} \in k^{-1}(1)$.

We check that now $\{\alpha_{\xi} : \xi\in \omega_1\}$ is
a free sequence in $X$. Fix any $\delta\in\omega_1$,
and let $\gamma_\delta=\sup\{\alpha_\xi : \xi < \delta\}$.
Since $X$ is right separated, the closure of
 $\{\alpha_\xi : \delta \leq \xi \} $ is a subset of
 $[\alpha_\delta,\kappa)$.
 The closure of $\{\alpha_\xi : \xi < \delta \}$ is a subset
of $W_{\beta_\delta}$ which is contained
 in $[0,\beta_\delta]$.  Since $\beta_\delta < \alpha_\delta$,
 this proves that indeed
  the sequence $\{\alpha_\xi : \xi < \omega_1\}$ is a free sequence.
\end{proof}

 \begin{proposition}\label{pr:oc}
 Assume  that there is an open cover $\mathcal{U}$ of the space $X$ such that for every
 $\mathcal{V} \subset \mathcal{U}$ with $|\mathcal{V}| < \kappa$ we have $\bigcup \{\overline{V} : V \in \mathcal{V}\} \ne X$.
 Then $X$ contains a strongly right separated subspace of cardinality $\kappa$.
\end{proposition}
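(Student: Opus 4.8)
The plan is to build the desired subspace by a straightforward transfinite recursion on $\xi < \kappa$, producing points $x_\xi \in X$ together with witnessing open sets $U_\xi \in \mathcal{U}$ with $x_\xi \in U_\xi$, arranged so that each new point lies outside all the previously chosen closures. Concretely, having chosen $\langle x_\eta, U_\eta : \eta < \xi \rangle$, I consider $\mathcal{V}_\xi = \{U_\eta : \eta < \xi\}$. Since $\xi < \kappa$ and $\kappa$ is a cardinal, $|\mathcal{V}_\xi| \le |\xi| < \kappa$, so the hypothesis yields $\bigcup\{\overline{U_\eta} : \eta < \xi\} = \bigcup\{\overline{V} : V \in \mathcal{V}_\xi\} \ne X$. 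Hence I may pick a point $x_\xi \in X \setminus \bigcup\{\overline{U_\eta} : \eta < \xi\}$, and since $\mathcal{U}$ is an open cover, I may then pick some $U_\xi \in \mathcal{U}$ with $x_\xi \in U_\xi$. This is exactly what keeps the recursion from getting stuck, so it runs through all $\xi < \kappa$.

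Next, let $Y = \{x_\xi : \xi < \kappa\}$ and order it by declaring $x_\eta \prec x_\xi \iff \eta < \xi$. First note that the points are pairwise distinct: if $\eta < \xi$, then $x_\eta \in U_\eta \subseteq \overline{U_\eta}$ while $x_\xi \notin \overline{U_\eta}$, so $x_\eta \ne x_\xi$. Thus $|Y| = \kappa$ and $\prec$ is a genuine well-ordering of $Y$ of order type $\kappa$.

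Finally, I claim $\prec$ witnesses that $Y$ is strongly right separated. For each $\xi$ put $W_{x_\xi} = \overline{U_\xi} \cap Y$. This set is closed in the subspace $Y$ and contains the relatively open set $U_\xi \cap Y \ni x_\xi$, so it is a closed neighborhood of $x_\xi$ in $Y$. Moreover, if $x_\xi \prec x_\eta$, i.e.\ $\xi < \eta$, then $x_\eta$ was chosen outside $\bigcup\{\overline{U_\zeta} : \zeta < \eta\} \supseteq \overline{U_\xi}$, whence $x_\eta \notin \overline{U_\xi} \supseteq W_{x_\xi}$. This is precisely the condition required in the definition of strong right separation, and it completes the argument.

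I expect no serious obstacle here: the cardinality assumption on $\mathcal{U}$ is tailored exactly to guarantee that the recursion can be carried through all $\kappa$ stages, and the only point that needs care is that the separating closed neighborhoods must live in the subspace topology of $Y$, which is why I intersect the ambient closures $\overline{U_\xi}$ with $Y$ rather than using them directly.
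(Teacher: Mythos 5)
Your proof is correct and follows exactly the paper's argument: the same transfinite recursion choosing $x_\xi$ and $U_\xi$ with each new point outside all previous closures, the hypothesis guaranteeing the recursion never stalls, and $\overline{U_\xi}$ (relativized to the subspace) serving as the separating closed neighborhoods. You simply spell out the details the paper leaves as ``clearly.''
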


\begin{proof}
We may choose by a straight-forward transfinite recursion points $x_\alpha \in X$ and members $U_\alpha$
of $\mathcal{U}$ for $\alpha < \kappa$ such that $x_\alpha \in U_\alpha$ and $x_\beta \notin \overline{U_\alpha}$
whenever $\alpha < \beta$. Clearly, then $\{x_\alpha : \alpha < \kappa\}$ is strongly right separated.
 \end{proof}

\begin{theorem}\label{tm:T2T3}
Assume we have $2^{<\mathfrak{c}} = \mathfrak{c}$,
and, moreover,
that every strongly separated space of cardinality $\mathfrak{c}$
has an uncountable discrete subspace. Then
\begin{enumerate}
  \item every Hausdorff space of countable spread has cardinality $\le \mathfrak{c}$;
  \item for every infinite regular space $X$ of countable spread we have $o(X) = \mathfrak{c}$.
\end{enumerate}
\end{theorem}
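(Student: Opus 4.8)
The plan is to derive both conclusions from a single consequence of the hypotheses, which I will call property $(\star)$: \emph{for every Hausdorff space $Z$ of countable spread, every open cover of $Z$ has a subfamily of size $<\mathfrak c$ whose closures cover $Z$, and the same holds hereditarily for every subspace.} Indeed, if some open cover $\mathcal U$ of $Z$ had no such subfamily, then Proposition~\ref{pr:oc} applied with $\kappa=\mathfrak c$ would yield a strongly right separated subspace of $Z$ of cardinality $\mathfrak c$; being strongly separated of size $\mathfrak c$, by hypothesis it would contain an uncountable discrete set, contradicting $s(Z)=\omega$. As every subspace of $Z$ again has countable spread, $(\star)$ holds hereditarily. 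Throughout I will use that $2^{<\mathfrak c}=\mathfrak c$ forces $\mathfrak c$ regular and $\mathfrak c^{<\mathfrak c}=\mathfrak c$, as well as the classical bound $hL(X)\le 2^{s(X)}=\mathfrak c$, from which the closed pseudocharacter satisfies $\psi_c(x,X)\le hL(X)\le\mathfrak c$ for each $x$ (cover $X\setminus\{x\}$ by the complements of closures of neighbourhoods of $x$ and extract a subcover of size $\le hL(X)$).

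For (1), fix a Hausdorff space $X$ of countable spread and, using $\mathfrak c^{<\mathfrak c}=\mathfrak c$, choose $M\prec H(\theta)$ with $X\in M$, $\mathfrak c\subseteq M$, $|M|=\mathfrak c$ and $[M]^{<\mathfrak c}\subseteq M$. I claim $X\subseteq M$, which gives $|X|\le\mathfrak c$. Suppose not and take $p\in X\setminus M$. For each $x\in X\cap M$ elementarity provides, since $\psi_c(x,X)\le\mathfrak c$, a family of $\le\mathfrak c$ open neighbourhoods of $x$ whose closures meet in $\{x\}$; this family lies in $M$ and, being indexed by $\mathfrak c\subseteq M$, has all its members in $M$, so as $p\ne x$ one member $W_x\in M$ satisfies $p\notin\overline{W_x}$. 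Applying $(\star)$ to the subspace $X\cap M$ and its open cover $\{W_x:x\in X\cap M\}$, I obtain $x_\xi$ ($\xi<\lambda$, $\lambda<\mathfrak c$) with $X\cap M\subseteq\bigcup_{\xi<\lambda}\overline{W_{x_\xi}}=:C$.

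This last step is where the decisive idea, and what I expect to be the main obstacle, lies: although the sets $W_{x_\xi}$ were selected using the external point $p$, the family $\{W_{x_\xi}:\xi<\lambda\}$ is a subset of $M$ of size $<\mathfrak c$, hence is an \emph{element} of $M$, so $C\in M$ as well. By construction $p\in X\setminus C$, so $X\setminus C\ne\emptyset$; by elementarity there is a witness $p'\in M$, that is $p'\in X\cap M$ with $p'\notin C$, contradicting $X\cap M\subseteq C$. Thus $X\subseteq M$ and $|X|\le\mathfrak c$. The crux is precisely the observation that closure of $M$ under $<\mathfrak c$--sequences lets the externally chosen covering family be reflected back into $M$, and here $(\star)$ is essential, since only a cover of size \emph{strictly} below $\mathfrak c$ is captured by such an $M$.

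For (2), let $X$ be infinite and regular of countable spread; recall that for regular spaces strong left/right separation coincides with ordinary left/right separation. Since $\mathfrak c$ is regular, $hL(X)=\mathfrak c$ would produce a right separated subspace of size $\mathfrak c$ (the supremum defining $hL$ is attained at regular cardinals); such a subspace is strongly right separated, so by hypothesis it would contain an uncountable discrete set, contradicting countable spread. Hence $hL(X)<\mathfrak c$. For regular $X$ this yields $w(X)\le 2^{hL(X)}\le 2^{<\mathfrak c}=\mathfrak c$, and as every open set is a union of at most $hL(X)$ members of a base of size $\le\mathfrak c$, we get $o(X)\le w(X)^{hL(X)}\le\mathfrak c^{hL(X)}=2^{hL(X)}\le 2^{<\mathfrak c}=\mathfrak c$. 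The reverse inequality $o(X)\ge\mathfrak c$ is routine, since an infinite regular space contains an infinite strongly discrete subspace, whose $2^{\aleph_0}$ subsets determine distinct open sets. Therefore $o(X)=\mathfrak c$.
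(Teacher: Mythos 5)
Your part (1) is correct, and it takes a genuinely different route from the paper: you run a closing-off argument with an elementary submodel $M$ of size $\mathfrak c$ closed under $<\mathfrak c$-sequences, using only the closed-pseudocharacter bound $\psi_c(x,X)\le\mathfrak c$ and the covering property $(\star)$ to reflect the point $p\notin M$ back into $M$. The paper instead extracts from $(\star)$ the sharper fact $\psi(x,X)<\mathfrak c$ for every $x$, stratifies $X=\bigcup\{X_\lambda:\omega\le\lambda<\mathfrak c\}$ by pseudocharacter, and applies the Hajnal--Juh\'asz bound $|Z|\le 2^{s(Z)\cdot\psi(Z)}$ to each piece. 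Both work; yours trades a classical cardinal inequality for a submodel argument whose crux (reflecting the $<\mathfrak c$-sized subcover into $M$) is exactly as you identify.

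Part (2), however, has two genuine gaps. First, the step ``$hL(X)=\mathfrak c$ would produce a right separated subspace of size $\mathfrak c$ (the supremum defining $hL$ is attained at regular cardinals)'' is unjustified precisely where it matters. Since $hL(X)>\lambda$ is equivalent to the existence of a right separated subspace of size $\lambda^+$, attainment is automatic when $\mathfrak c$ is a successor; but in the models this theorem is aimed at, $\mathfrak c$ is weakly compact, hence a regular \emph{limit} cardinal, and nothing you have said rules out right separated subspaces of unboundedly many sizes $<\mathfrak c$ with none of size $\mathfrak c$. The hypothesis of the theorem only speaks about strongly separated spaces of cardinality exactly $\mathfrak c$, so it gives no control over the sup. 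Second, and fatally, the inequality $w(X)\le 2^{hL(X)}$ for regular $X$ is not a ZFC theorem: its instance $hL(X)=\omega$ asserts that every regular hereditarily Lindel\"of space has weight $\le\mathfrak c$, which the introduction of this very paper identifies as the open consistency problem being solved, and which fails in the Juh\'asz--Shelah models cited there (regular hereditarily Lindel\"of spaces of weight $2^{\mathfrak c}$). The correct classical bound passes through $nw(X)\le 2^{s(X)}$ and $w(X)\le 2^{d(X)}$ and only yields $w(X)\le 2^{\mathfrak c}$ here. The paper's proof of (2) sidesteps both problems: it never bounds the supremum $hL(X)$ or the weight at all, but uses the hypothesis pointwise to get $d(Y)<\mathfrak c$ for each individual subspace $Y$ (via left separated subspaces, which are strongly left separated by regularity), so that every closed set is the closure of a set in $[X]^{<\mathfrak c}$; then $|[X]^{<\mathfrak c}|=\mathfrak c$ counts the closed sets, hence $o(X)\le\mathfrak c$. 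If you want to salvage your approach, replace the two faulty steps by: $d(X)<\mathfrak c$ gives $w(X)\le 2^{d(X)}\le\mathfrak c$, and each open subspace has Lindel\"of number $<\mathfrak c$ (a pointwise consequence of the hypothesis, no attainment needed), so every open set is a union of $<\mathfrak c$ members of a base of size $\mathfrak c$, whence $o(X)\le\mathfrak c^{<\mathfrak c}=\mathfrak c$.
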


\begin{proof}

(1) Let $X$ be any Hausdorff space of countable spread. We first show that every point $x \in X$
  has pseudo-character $\psi(x,X) < \mathfrak{c}$. To see this, we choose for every point $y \in X \setminus \{x\}$
  an open neighborhood $U_y$ such that
  $x \notin \overline{U_y}$. this is possible because $X$ is Hausdorff. We may now apply Proposition \ref{pr:oc}
  to the open cover $\{U_y : y \in X \setminus \{x\}\}$ of the space $X \setminus \{x\}$ to find
  a subset $Y \subset X \setminus \{x\}$ such that $|Y| < \mathfrak{c}$ and $X \setminus \{x\} = \bigcup \{\overline{U_y} : y \in Y\}$.
  This clearly implies $\psi(x,X) \le |Y| < \mathfrak{c}$.

  Now, for every infinite cardinal $\lambda < \mathfrak{c}$ let us put $$X_\lambda = \{x \in X : \psi(x,X) \le \lambda\}.$$
  Since $|Z| \le 2^{s(Z)\cdot\psi(Z)}$ holds for any $T_1$-space $Z$, see e.g. \cite{MR0229195} or \cite{MR0576927},
  we get using $2^\lambda = \mathfrak{c}$ that
  $|X_\lambda| \le \mathfrak{c}$. But we have $X = \bigcup \{X_\lambda : \omega \le \lambda < \mathfrak{c}\}$ by the above,
  consequently $|X| \le \mathfrak{c}$ as well.

(2)  Now, let $X$ be any infinite regular space $X$ of countable spread. We know by (1) that $|X| \le \mathfrak{c}$.
  It is also well-known that every topological space $Y$ has a left separated subspace of size $d(Y)$, the density
  of $Y$. As $X$ is regular, its left separated subspaces are actually strongly left separated, hence
  it follows from our assumptions that the density of any subspace of $X$ is $< \mathfrak{c}$. In particular, then
  every closed subset $F$ of $X$ is of the form $F = \overline{A}$, where $A \in [X]^{< \mathfrak{c}}$. But
  $|X| \le \mathfrak{c}$ and $2^{<\mathfrak{c}} = \mathfrak{c}$ imply $|[X]^{< \mathfrak{c}}| = \mathfrak{c}$,
  hence the number of all closed subsets of $X$, that is $o(X)$, is at most $\mathfrak{c}$. But for every infinite
  Hausdorff space $X$ we obviously have $o(X) \ge \mathfrak{c}$, consequently $o(X) = \mathfrak{c}$.
\end{proof}

\begin{corollary}  Under the assumptions of Theorem \ref{tm:T2T3} every  regular space of countable spread
has weight and cardinality at most $\mathfrak c$. In particular, every regular hereditary Lindelöf space
has weight $\le \mathfrak c$.
\end{corollary}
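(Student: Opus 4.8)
The plan is to read both parts of the corollary directly off Theorem~\ref{tm:T2T3}, since its hypotheses are precisely the standing assumption $2^{<\mathfrak c}=\mathfrak c$ together with the conclusion of Proposition~\ref{pr:arrow}, which is in force here. Fix an (infinite, by our convention) regular space $X$ of countable spread. Because every regular space is Hausdorff, part (1) of the theorem applies verbatim and yields $|X|\le\mathfrak c$, which settles the cardinality claim.

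For the weight I would invoke the trivial bound $w(X)\le o(X)$, valid for every topological space: any base is a subfamily of the topology, so the least cardinality of a base cannot exceed the total number of open sets. Part (2) of the theorem gives $o(X)=\mathfrak c$ for our $X$, and combining the two inequalities gives $w(X)\le o(X)=\mathfrak c$. This is exactly the chain $w(X)\le o(X)$ already noted in the introduction, now sharpened from the upper bound $2^{\mathfrak c}$ to $\mathfrak c$. Together with the previous paragraph this proves the first sentence of the corollary.

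For the \emph{in particular} clause, the only point to verify is that hereditary Lindel\"ofness implies countable spread. An uncountable discrete space is not Lindel\"of --- no countable subfamily of the cover by singletons can cover it --- so a hereditarily Lindel\"of space can contain no uncountable discrete subspace, and hence has countable spread. Thus a regular hereditary Lindel\"of space falls under the first part of the corollary and has weight $\le\mathfrak c$.

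I do not expect a genuine obstacle here: all the substance sits in Theorem~\ref{tm:T2T3}, and the corollary merely packages its two conclusions together with two elementary observations (regular $\Rightarrow$ Hausdorff, and hereditary Lindel\"of $\Rightarrow$ countable spread). The one thing worth stating explicitly, rather than leaving implicit, is that our blanket convention that every space is infinite is what licenses the application of part (2) of the theorem, whose statement is restricted to infinite spaces.
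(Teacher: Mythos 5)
Your proposal is correct and matches the paper's (implicit) reasoning: the corollary is stated without proof precisely because it follows from Theorem \ref{tm:T2T3} exactly as you describe, via regular $\Rightarrow$ Hausdorff for part (1), $w(X)\le o(X)=\mathfrak c$ for part (2), and hereditarily Lindel\"of $\Rightarrow$ countable spread for the final clause. Nothing is missing.
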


By Proposition \ref{pr:arrow} the assumptions of Theorem \ref{tm:T2T3} follow if we have both $2^{<\mathfrak{c}} = \mathfrak{c}$
and $\mathfrak{c} \rightarrow (\omega_1, (\omega_1;\omega_1))$. The next two sections are devoted to proving that even
the consistency of $2^{<\mathfrak{c}} = \mathfrak{c}$
and $\mathfrak{c} \rightarrow (\mathfrak{c}, (\omega_1;\omega_1))$
can be established, modulo the existence of a weakly compact cardinal.

Next we point out that Theorem \ref{tm:T2T3} implies that every regular $\Delta$-space
has cardinality $< \mathfrak{c}$. This fact, that is immediate from Corollary 4.2 of \cite{JvMSSz},
yields a consistent affirmative answer to Problem 4.4 of \cite{JvMSSz} in the regular case.

We end this section by formulating several problems that we think deserve further research.

\begin{problem}
Does the consistency of part (2) of Theorem \ref{tm:T2T3} require large cardinals?
Does the consistency of all hereditary Lindelöf regular spaces having weight $\le \mathfrak{c}$ require large cardinals?
\end{problem}

\begin{problem}
Is part (1) or (2) of Theorem \ref{tm:T2T3} consistent with CH?
\end{problem}

We note here that $\omega_1 \rightarrow (\omega_1, (\omega_1;\omega_1))$ would imply that there are no $L$-spaces,
hence it is provably false in ZFC, see \cite{MR2220104}.

\begin{problem}
Is it consistent to extend part (2) of Theorem \ref{tm:T2T3} from regular to Hausdorff spaces?
\end{problem}

\begin{problem}
Is it consistent to have $|X| \le 2^{s(X)}$ for every regular (or even Hausdorff) space $X$?
Is it consistent to have $w(X) \le 2^{hL(X)}$ for every regular space $X$?
\end{problem}

\section{on the forcing}
Let $\kappa$ be a cardinal.  We will produce
a model using the following poset that was
introduced in \cite{FuchinoShelahSoukup} and was called there the {\em pseudo product}
of $\kappa$ copies of the Cohen forcing.

\begin{definition}
The poset $P_\kappa=\Pi^*_{\alpha\in\kappa} 2^{<\omega}$ consists
of
the set of countable partial functions
from $\kappa$ to $2^{<\omega}$.

For $p,q\in P_\kappa$, we define
 $p< q$ providing
\begin{enumerate}
\item $\dom(p)\supset \dom(q)$,
\item for all $\alpha\in \dom(q)$,
   $q(\alpha)\subset p(\alpha)$,
   \item  $\{\beta\in \dom(q):
    p(\beta)\neq q(\beta)\}$ is finite.
\end{enumerate}
\end{definition}

The following property of $P_\kappa$ is taken from
\cite{FuchinoShelahSoukup}*{Corollary 2.4(a), Theorem 2.6}
\begin{lemma}
 The poset $P_\kappa$ is\label{poset} proper, has cardinality
  $\kappa^{\aleph_0} $ and satisfies
 the $\mathfrak c^+$-cc.
\end{lemma}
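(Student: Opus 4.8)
I would prove the three assertions separately, expecting only properness to require real work. For the cardinality, a condition is a countable partial function from $\kappa$ into the countable set $2^{<\omega}$, so the number of conditions is $|[\kappa]^{\le\omega}|\cdot\aleph_0^{\aleph_0}=\kappa^{\aleph_0}\cdot\aleph_0^{\aleph_0}=\kappa^{\aleph_0}$, using $\mathfrak c=2^{\aleph_0}\le\aleph_0^{\aleph_0}\le\kappa^{\aleph_0}$. For the $\mathfrak c^+$-cc, I would first record the compatibility criterion: $p$ and $q$ have a common extension iff $\{\alpha\in\dom(p)\cap\dom(q):p(\alpha)\ne q(\alpha)\}$ is finite and $p(\alpha),q(\alpha)$ are comparable in $2^{<\omega}$ for every $\alpha$ in the common domain, clause (3) being what forces this difference set to be finite. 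Then, given any $\{p_i:i<\mathfrak c^+\}$, I would apply the $\Delta$-system lemma to the countable sets $\dom(p_i)$, which is legitimate since $\mathfrak c^+$ is regular and $\lambda^{\aleph_0}\le\mathfrak c^{\aleph_0}=\mathfrak c<\mathfrak c^+$ for all $\lambda\le\mathfrak c$; passing to a subfamily of size $\mathfrak c^+$ gives a $\Delta$-system with countable root $R$. Since there are only $\aleph_0^{\aleph_0}=\mathfrak c$ possible restrictions to $R$, a further subfamily of size $\mathfrak c^+$ shares one restriction to $R$, and any two of its members agree on their whole common domain $R$, hence are compatible; so no antichain has size $\mathfrak c^+$.

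The real content is properness, and here the structural key is to factor the ordering. Call $q\le^* p$ a \emph{pure} extension if $q\le p$ and $q(\alpha)=p(\alpha)$ for all $\alpha\in\dom(p)$, so that $q$ only adjoins new coordinates, and call $s$ a \emph{finite modification} of $p$ if $\dom(s)=\dom(p)$ and $s$ end-extends only finitely many of the stems $p(\alpha)$. Two facts drive the argument: every $r\le p$ is a pure extension of some finite modification of $p$; and the pure order $\le^*$ is $\sigma$-closed, since the union of a $\le^*$-decreasing $\omega$-sequence is again a condition extending each term. Moreover, a fixed condition has only countably many finite modifications.

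To establish properness I would fix a countable $M\prec H(\theta)$ with $P_\kappa\in M$ and a condition $p\in P_\kappa\cap M$ (so $\dom(p)\subseteq M\cap\kappa$, being a countable element of $M$), enumerate the dense open sets lying in $M$, and build inside $M$ a $\le^*$-decreasing sequence $p=q_0\ge^* q_1\ge^*\cdots$ with $\sigma$-closed limit $q=\bigcup_n q_n$, arranged so that $\dom(q)\subseteq M\cap\kappa$ and the stems of $q$ on $\dom(p)$ are never touched. Alongside it I would assemble a countable set $D_0$ of witnesses such that, for every dense open $D\in M$ and every finite modification $s$ of every $q_m$, $D_0$ contains some $d\in D\cap M$ compatible with $s$, the new coordinates of $d$ being absorbed into $\dom(q)$ with matching stems. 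The factorization then yields that $D_0\subseteq M$ is predense below $q$: an arbitrary $r\le q$ is a pure extension of a finite modification of $q$, which is a finite modification of some $q_m$, hence compatible with a member of $D_0$. Thus $q$ is $(M,P_\kappa)$-generic, which is properness.

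The main obstacle is precisely that $P_\kappa$ is \emph{not} $\sigma$-closed: clause (3) permits only finitely many of the old stems to be changed, so one cannot meet countably many dense sets along a single descending chain and pass to the union. The purpose of the construction is to separate the two kinds of extension. The stem-modifications on $\dom(p)$ that are needed to enter a dense set cannot be absorbed into the pure limit $q$ without violating either purity or the finiteness in clause (3); so rather than pushing them into $q$, I keep for each of the countably many finite modifications $s$ of the $q_m$ a compatible witness $d\in D_0$ and absorb only the new coordinates of $d$ into $\dom(q)$. Making the bookkeeping close off — so that every finite modification of the limit $q$ has already been served at some finite stage while $\dom(q)$ stays within $M\cap\kappa$ — is the delicate point on which the whole proof turns.
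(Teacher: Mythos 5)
The paper offers no proof of this lemma at all: it simply cites \cite{FuchinoShelahSoukup} (Corollary 2.4(a) and Theorem 2.6), so there is no in-paper argument to match yours against. What you have written is a correct self-contained substitute. The cardinality count and the $\mathfrak c^+$-cc argument (compatibility iff the stems are pairwise comparable and differ at only finitely many common coordinates, then a $\Delta$-system of countable domains over $\mathfrak c^+$ using $\lambda^{\aleph_0}\le\mathfrak c$, then uniformizing the restriction to the root) are exactly the routine arguments one would expect, and they work. For properness, your decomposition of $\le$ into the $\sigma$-closed pure order $\le^*$ and the countably many finite stem-modifications is the right structural idea --- it is in effect an Axiom A/fusion argument of the same kind as in Fuchino--Shelah--Soukup --- and the verification goes through: every $r\le q$ restricts on $\dom(q)$ to a finite modification whose altered coordinates already lie in some $\dom(q_m)$, so the witness reserved for that task is compatible with $r$.

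One point deserves more care than your phrasing gives it. When you serve the task attached to a finite modification $s$ of $q_m$ at some later stage $k\ge m$, it is not enough to take $d\in D\cap M$ merely \emph{compatible} with $s$: such a $d$ could carry stems on $\dom(q_k)\setminus\dom(q_m)$ that conflict with the values $q_k$ has already committed to, and then its coordinates cannot be ``absorbed with matching stems.'' The fix is to choose $d\le s\cup\bigl(q_k\restriction(\dom(q_k)\setminus\dom(q_m))\bigr)$, which density and elementarity permit, and then set $q_{k+1}=q_k\cup\bigl(d\restriction(\dom(d)\setminus\dom(q_k))\bigr)$; since all later extensions are pure, $q$ agrees with $d$ off $\dom(q_m)$ and the compatibility of $d$ with any $r\le q$ whose modifications sit inside $\dom(q_m)$ follows. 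With that adjustment your outline is a complete proof.
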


 By a standard counting of nice names argument,
 this next result follows from Lemma \ref{poset}.

\begin{proposition} If $ \kappa$ is a strongly inaccessible
cardinal, then $2^{<\mathfrak c} = \mathfrak c=\kappa$ holds
in the forcing extension by $P_\kappa$.
\end{proposition}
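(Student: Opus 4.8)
The plan is to run the "counting of nice names" computation promised in the preceding sentence, feeding into it exactly the three properties of $P_\kappa$ recorded in Lemma \ref{poset}, and combining them with the two elementary facts about a strongly inaccessible $\kappa$: it is regular and a strong limit, so that $2^{<\kappa}=\kappa$ and $\kappa^\lambda=\kappa$ for every $\lambda<\kappa$. Write $\mathfrak c_0=2^{\aleph_0}$ as computed in the ground model $V$; since $\kappa$ is a strong limit we have $\mathfrak c_0<\kappa$, and by the power rule $|P_\kappa|=\kappa^{\aleph_0}=\kappa$. For this particular proposition the chain condition and the size bound are what actually do the work; properness is not needed. All cardinal arithmetic below is carried out in $V$ unless the extension $V[G]$ is named explicitly.

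First I would establish the lower bound $2^{\aleph_0}\ge\kappa$ in $V[G]$. For each $\alpha<\kappa$ the set of conditions whose domain contains $\alpha$ is dense: adjoin $\alpha$ to the domain with value the empty stem, which only enlarges $\dom$ and changes no old coordinate, so the result is a legitimate extension. Likewise, for each $n$ it is dense to have $|p(\alpha)|\ge n$ (extend the single coordinate $\alpha$). Hence $G$ yields for every $\alpha<\kappa$ a real $c_\alpha=\bigcup\{p(\alpha):p\in G\}\in 2^\omega$. For $\alpha\ne\beta$ it is dense to force $p(\alpha)$ and $p(\beta)$ to disagree on some bit (extend the two coordinates $\alpha$ and $\beta$, a finite change), so the $c_\alpha$ are pairwise distinct and $\kappa\le 2^{\aleph_0}$ in $V[G]$.

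The main work is the matching upper bound. By the $\mathfrak c^+$-cc every antichain of $P_\kappa$ has cardinality $\le\mathfrak c_0$, so the number of antichains is at most $|P_\kappa|^{\mathfrak c_0}=\kappa^{\mathfrak c_0}=\kappa$, using $\mathfrak c_0<\kappa$ and inaccessibility. A nice $P_\kappa$-name for a subset of an ordinal $\mu<\kappa$ is a function assigning an antichain to each $\xi<\mu$, so there are at most $\kappa^{|\mu|}=\kappa$ of them; consequently $(2^{|\mu|})^{V[G]}\le\kappa$ for every $\mu<\kappa$. In particular $(2^{\aleph_0})^{V[G]}\le\kappa$, which together with the previous paragraph gives $(2^{\aleph_0})^{V[G]}=\kappa$. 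Since the $\mathfrak c^+$-cc preserves all cardinals $\ge\mathfrak c_0^+$, the cardinal $\kappa$ survives into $V[G]$, so $\mathfrak c=\kappa$ holds there.

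It then remains to read off $2^{<\mathfrak c}=\mathfrak c$ in $V[G]$, that is $2^{<\kappa}=\kappa$. Any infinite cardinal $\mu<\kappa$ of $V[G]$ is a fortiori a cardinal of $V$ (forcing creates no new cardinals), hence a ground-model ordinal $<\kappa$, so the name count above yields $(2^\mu)^{V[G]}\le\kappa$; the reverse inequality is immediate from $(2^{\aleph_0})^{V[G]}=\kappa$. Taking the supremum over all such $\mu$ gives $2^{<\kappa}=\kappa$ in $V[G]$, completing the proof. The only point that requires genuine care is keeping the two continua apart — the ground-model value $\mathfrak c_0$ that bounds antichain sizes versus the extension value $\kappa$ — and verifying that it is precisely the hypotheses of Lemma \ref{poset} that make the name count collapse to $\kappa$; everything else is routine.
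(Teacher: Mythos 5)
Your proposal is correct and is exactly the ``standard counting of nice names argument'' that the paper invokes without writing out: you use the cardinality bound $|P_\kappa|=\kappa^{\aleph_0}=\kappa$ and the $\mathfrak c^+$-cc from Lemma~\ref{poset} together with the inaccessibility of $\kappa$ to bound the number of nice names, plus the routine density argument for the lower bound. The bookkeeping distinguishing the ground-model continuum (which bounds antichain sizes) from the extension's continuum $\kappa$ is handled correctly, so nothing further is needed.
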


The following property of this poset is obvious but is crucial
to our forcing result. This property is stronger
than simply having that the conditions have countable support.

\begin{proposition}  If $\{ p_n : n\in \omega\}\subset
 P_\kappa$ have disjoint domains, then $\bigcup_n p_n$ is
 a common extension of each $p_n$ in $P_\kappa$.
\end{proposition}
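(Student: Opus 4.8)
The plan is to set $q := \bigcup_n p_n$ and to verify in two stages: first that $q$ is a legitimate member of $P_\kappa$, and then that $q < p_m$ holds for each fixed $m \in \omega$, which is exactly the assertion that $q$ is a common extension of all the $p_n$. Both stages reduce to short, direct checks of the combinatorial conditions defining $P_\kappa$ and its ordering, and in each the disjointness hypothesis is what makes the checks succeed.

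For membership, I would note that since the domains $\dom(p_n)$ are pairwise disjoint, no $\alpha \in \kappa$ lies in two distinct $\dom(p_n)$, so no value conflict can occur and $q$ is a genuine single-valued partial function from $\kappa$ into $2^{<\omega}$. Its domain $\bigcup_n \dom(p_n)$ is a countable union of countable sets, hence countable, so $q \in P_\kappa$.

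For the ordering, fix $m \in \omega$. Clause (1), $\dom(q) \supseteq \dom(p_m)$, is immediate. The decisive observation for clauses (2) and (3) is that for every $\alpha \in \dom(p_m)$ we actually have $q(\alpha) = p_m(\alpha)$: by disjointness, $\alpha$ lies in no $\dom(p_n)$ with $n \ne m$, so the value of $q$ at $\alpha$ comes solely from $p_m$. Hence $p_m(\alpha) \subseteq q(\alpha)$ (indeed with equality), giving clause (2), and the set $\{\beta \in \dom(p_m) : q(\beta) \ne p_m(\beta)\}$ is empty, so clause (3) holds vacuously. There is no genuine obstacle here: the entire point of the statement is that disjoint domains collapse the otherwise-restrictive finite-modification clause (3) to the empty set, allowing countably many conditions to be amalgamated freely. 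The only step needing any care is recording that a countable union of countable domains stays countable, which is precisely what keeps $q$ inside $P_\kappa$.
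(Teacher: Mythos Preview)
Your proof is correct and is exactly the routine verification the paper has in mind; the paper itself gives no proof, merely declaring the proposition ``obvious'' and ``crucial''. Your write-up makes explicit the one point worth recording, namely that disjointness forces $q(\alpha)=p_m(\alpha)$ on all of $\dom(p_m)$, so the finite-modification clause (3) is satisfied with an empty set.
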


\section{on the consistency of $\mathfrak c \rightarrow
 (\omega_1, (\omega_1;\omega_1))$}

Throughout this section, let $\kappa$ be a weakly compact cardinal.
We prove this next theorem:

\begin{theorem} If $\kappa$ is\label{main*} a weakly compact cardinal, and if
 $G$ is a $P_\kappa$-generic filter, then
 $\kappa \rightarrow (\kappa, (\omega_1;\omega_1))$ plus
  $2^{<\kappa}=\kappa=\mathfrak c$ hold
  in $V[G]$.
\end{theorem}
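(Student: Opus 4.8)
Since a weakly compact cardinal is strongly inaccessible, the cardinal arithmetic $2^{<\kappa}=\kappa=\mathfrak c$ is already delivered by the earlier proposition on strongly inaccessible cardinals, so all the work goes into the partition relation $\kappa \rightarrow (\kappa,(\omega_1;\omega_1))$. Fix a $P_\kappa$-name $\dot k$ for a coloring $[\kappa]^2\rightarrow 2$ and a condition $p$ forcing that there is \emph{no} homogeneous set of size $\kappa$ in color $0$; the goal is to find below $p$ a color-$1$ $(\omega_1;\omega_1)$-type half graph. Because $\kappa$ is inaccessible, Lemma \ref{poset} gives $|P_\kappa|=\kappa^{\aleph_0}=\kappa$ and the $\mathfrak c^+ = \kappa^+$-cc, so by a nice-name count $\dot k$ may be coded by a subset of $\kappa$. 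I would first reduce the target: it suffices to produce, in $V[G]$, a set $S\in[\kappa]^\kappa$ with the property that every countable $A\subseteq S$ admits a $\beta$ with $A\subseteq\beta$ and $k(\{\alpha,\beta\})=1$ for all $\alpha\in A$ (this is exactly property (*) of the abstract). From such an $S$ one recursively chooses $\alpha_\xi\in S$ and matching witnesses $\beta_\xi>\alpha_\xi$, interleaving so that $\sup A=\sup B$ at a point of cofinality $\omega_1$, and then thins exactly as in the proof of Proposition \ref{pr:arrow} to an $(\omega_1;\omega_1)$-type half graph homogeneous in color $1$.

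\textbf{The weakly compact engine, lifted through the forcing.} I would invoke weak compactness in the embedding form: choose a transitive $M$ with $|M|=\kappa$, $\kappa\in M$, ${}^{<\kappa}M\subseteq M$, containing (a code for) $\dot k$ and $P_\kappa$, together with an elementary $j:M\to N$ with critical point $\kappa$ and $j(P_\kappa)=P_{j(\kappa)}^N$. The decisive observation is that every $p\in P_\kappa$ has a countable, hence bounded, domain, so $j\!\restriction\!\dom(p)=\mathrm{id}$ and $j(p)=p$; thus $P_\kappa$ sits inside $P_{j(\kappa)}$ as the conditions with domain bounded below $\kappa$, and $j''G=G$. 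Here the disjoint-domain amalgamation property of $P_\kappa$ enters: the coordinates split into $[0,\kappa)$, carrying exactly $P_\kappa$ and the filter $G$, and $[\kappa,j(\kappa))$, carrying an independent copy of the pseudo-product over $N[G]$. Since $N$ has size $\kappa$, I would enumerate its $\kappa$-many dense subsets of the upper factor and build, inside $V[G]$, a filter $H$ meeting all of them — the disjoint-domain compatibility guarantees that a condition on the upper coordinates can always be extended while staying compatible with everything chosen so far. Setting $G'$ to be the filter generated by $G\cup H$ yields an $N$-generic for $P_{j(\kappa)}$ with $G\subseteq G'$, so $j$ lifts to an elementary $j^+:M[G]\to N[G']$ sending $k=\dot k^{G}$ to $j^+(k)=j(\dot k)^{G'}$.

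\textbf{Reflection at the critical point.} In $N[G']$ the coloring $j^+(k)$ colors $[j(\kappa)]^2$ and, since $\kappa$ is the critical point, restricts to $k$ on $[\kappa]^2$. I would examine the column of the new ordinal $\kappa$, namely $A_1=\{\alpha<\kappa: j^+(k)(\{\alpha,\kappa\})=1\}$. The hypothesis that $p$ forces no color-$0$ homogeneous set of size $\kappa$ passes to $M[G]$ and, by elementarity, to $N[G']$ for $j^+(k)$; the plan is to use this to force $A_1$ to be cofinal in $\kappa$, and then to reflect the first-order assertion ``there is a $\beta$ below which a prescribed finite/countable set is connected in color $1$'' back through $j$ to obtain, in $M[G]$ and hence in $V[G]$, unboundedly many such witness ordinals $\beta$. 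Iterating this reflection — threading witnesses together at points of cofinality $\omega_1$ — is intended to assemble the required set $S$ with property (*), from which the half graph is built as in the first paragraph.

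\textbf{Main obstacle.} I expect the genuine difficulty to be twofold. First, in the lifting step one must verify that the disjoint-domain amalgamation truly yields an $N$-generic $G'\supseteq G$: properness and the $\kappa^+$-cc are needed to control names and to confirm that the upper factor on $[\kappa,j(\kappa))$ behaves, over $N[G]$, as a fresh pseudo-product whose dense sets can all be met from within $V[G]$. Second, and harder, is the combinatorial dichotomy — transferring the color-$1$ largeness witnessed by the single ordinal $\kappa$ in $N[G']$ down to an honest (*)-set in $V[G]$, while excluding the color-$0$ alternative. The latter is subtle because the complement of $A_1$ is homogeneous only \emph{relative} to $\kappa$, not internally, so converting a persistently large color-$0$ column into a genuine color-$0$ homogeneous set of size $\kappa$ (contradicting the hypothesis) is precisely where the full strength of weak compactness, rather than mere reflection, should be required.
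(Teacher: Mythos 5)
Your reduction of the theorem to the statement (*) and your handling of the cardinal arithmetic match the paper, but your route to (*) — lifting a weakly compact embedding $j:M\to N$ through $P_\kappa$ and reflecting at the critical point — is not the paper's route, and it contains a gap that I do not think can be repaired as stated. The crux is the construction of the tail generic: you claim to build, inside $V[G]$, a filter $H$ on the upper factor $P_{[\kappa,j(\kappa))}$ meeting all $\kappa$-many dense sets of $N[G]$, justified by ``disjoint-domain compatibility.'' But disjoint-domain amalgamation only helps when the conditions you accumulate occupy pairwise disjoint coordinates, and meeting dense sets forces you to extend repeatedly on coordinates already occupied. On a single coordinate the poset is $2^{<\omega}$, so a strictly decreasing $\omega$-chain of conditions already has no common extension; the pseudo-product is therefore not even countably closed in the relevant sense, and there is no recursion of length $\kappa$ that threads through limit stages. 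Indeed, an $N[G]$-generic for even one coordinate $\gamma\in[\kappa,j(\kappa))$ is a Cohen real over $N[G]$, and whether such a real exists in $V[G]$ depends delicately on which reals and nice names land in $M$ and $N$; nothing in your setup guarantees it, let alone full genericity for the whole tail. A secondary problem: even granting the lift, elementarity of $j^+:M[G]\to N[G']$ only produces a witness $\beta<\kappa$ for countable sets $A$ that are \emph{elements of} $M[G]$, whereas (*) demands a witness for every countable $A\subseteq S$ in $V[G]$; such an $A$ codes an arbitrary real of $V[G]$ and need not belong to $M[G]$, and covering $A$ by a ground-model countable set does not help because the cover need not be contained in $S$.

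The paper avoids embeddings entirely. It uses the Ramsey-theoretic form of weak compactness — every $f:[\kappa]^3\to S$ with $|S|<\kappa$ has a $\kappa$-sized homogeneous set — applied to the coloring of triples by the isomorphism type of the three conditions $p(\xi,\eta),p(\xi,\zeta),p(\eta,\zeta)$ deciding membership in $\dot E$, together with the order types and order isomorphisms of their domains. Homogeneity then yields either a $0$-homogeneous set outright (Case 1) or a uniform decomposition of each $p(\xi,\eta)$ into a common root below $\bar\xi$, a piece $p_\xi$ depending only on $\xi$, a piece $q_\zeta$ depending only on $\zeta$, and pairwise disjoint remainders (Claim \ref{wc1}); the disjoint-domain amalgamation is then used exactly where it is legitimate — to glue countably many conditions with genuinely disjoint domains into a single condition forcing a whole countable set into the color-$1$ relation with one $\beta$. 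You correctly identified the two hard points of your plan as obstacles, but they are not technicalities: the first is where the embedding approach breaks for Cohen-like forcing, which is precisely why the paper works with homogeneous sets for colorings of triples instead.
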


Of course it immediately begs the question as to whether one
needs that $\kappa$ is weakly compact (or a large cardinal)
to obtain a similar result.
To this end we note that we obtain a stronger result.

\begin{definition} We define a statement (*):\\
(*) For any
coloring $k : [\mathfrak{c}]^2 \rightarrow 2$ either there is a homogeneous set
of size $\mathfrak{c}$ in color $0$ or there is a set $S \in [\mathfrak{c}]^\mathfrak{c}$
such that for every countable $A \subset S$ there is $\beta \in \mathfrak{c}$ for which $A \subset \beta$
and $k(\{\alpha, \beta\}) = 1$ for all $\alpha \in A$.
\end{definition}

\begin{proposition}  The statement (*) implies
that $\mathfrak c \rightarrow (\mathfrak{c},(\omega_1;\omega_1))$.
\end{proposition}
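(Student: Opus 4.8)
The plan is to take an arbitrary coloring $k : [\mathfrak c]^2 \rightarrow 2$, feed it into (*), and massage the second alternative of (*) into an $(\omega_1;\omega_1)$-type half graph homogeneous in color $1$ by a transfinite recursion of length $\omega_1$. If (*) returns a homogeneous set of size $\mathfrak c$ in color $0$, then this is already the first alternative of $\mathfrak c \rightarrow (\mathfrak c, (\omega_1;\omega_1))$ and nothing more is needed; so I would assume instead that we are handed a set $S \in [\mathfrak c]^\mathfrak c$ with the stated property. Two elementary facts keep the recursion alive: since $|S| = \mathfrak c$, the set $S$ is cofinal in $\mathfrak c$ (a bounded subset of $\mathfrak c$ has cardinality $< \mathfrak c$), and since $\operatorname{cf}(\mathfrak c) > \omega$ by K\"onig's theorem, the supremum of any countable subset of $\mathfrak c$ lies below $\mathfrak c$.

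Concretely, I would build strictly increasing sequences $\langle \alpha_\xi : \xi < \omega_1\rangle$ in $S$ and $\langle \beta_\xi : \xi < \omega_1\rangle$ in $\mathfrak c$ that interleave as $\alpha_\xi < \beta_\xi < \alpha_{\xi+1}$. At stage $\xi$, having defined $\alpha_\eta,\beta_\eta$ for $\eta < \xi$, first choose $\alpha_\xi \in S$ above $\sup\{\beta_\eta : \eta < \xi\}$; this is possible because that supremum is below $\mathfrak c$ and $S$ is cofinal. Then apply the property of $S$ to the countable set $A_\xi = \{\alpha_\eta : \eta \le \xi\} \subset S$ to obtain $\beta_\xi$ with $A_\xi \subset \beta_\xi$ and $k(\{\alpha_\eta, \beta_\xi\}) = 1$ for every $\eta \le \xi$. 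Observe that $A_\xi \subset \beta_\xi$ forces $\beta_\xi > \alpha_\xi > \sup\{\beta_\eta : \eta < \xi\}$, so both sequences are strictly increasing and genuinely interleaved.

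It then remains to check that $A = \{\alpha_\xi : \xi < \omega_1\}$ and $B = \{\beta_\xi : \xi < \omega_1\}$ witness the half graph. Disjointness and order type $\omega_1$ for each follow at once from the strict interleaving, which also yields $\sup A = \sup B$. Finally, for $\alpha \in A$ and $\beta \in B$ the inequality $\alpha < \beta$ holds precisely when $\alpha = \alpha_\eta$, $\beta = \beta_\xi$ with $\eta \le \xi$, since $\beta_\xi < \alpha_{\xi+1}$ excludes $\eta > \xi$; and for exactly these pairs we secured $k(\{\alpha_\eta,\beta_\xi\}) = 1$ at stage $\xi$. Hence $(A;B) \subset k^{-1}(1)$, so $(A;B)$ is an $(\omega_1;\omega_1)$-type half graph homogeneous in color $1$, as required.

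There is no genuine obstacle of principle here; the construction is a clean recursion. The one point demanding care is the bookkeeping that makes the interleaving exact, namely the observation that choosing $\alpha_\xi$ above all earlier $\beta_\eta$ and then extracting $\beta_\xi$ above all $\alpha_\eta$ with $\eta \le \xi$ is precisely what simultaneously guarantees strict monotonicity of both sequences, the equality $\sup A = \sup B$, and the identification of the pairs constituting $(A;B)$ with the pairs $\{\alpha_\eta,\beta_\xi\}$, $\eta \le \xi$, on which color $1$ was arranged.
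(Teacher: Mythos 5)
Your proposal is correct and follows exactly the route the paper takes: the paper's proof simply says that, using $\operatorname{cf}(\mathfrak c)>\omega$, one "easily chooses by recursion" the interleaved sequences $A\subset S$ and $B\subset\mathfrak c$ with $(A;B)\subset k^{-1}(1)$, and your recursion is precisely that construction with the bookkeeping written out. Nothing is missing.
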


\begin{proof} Let $k$ be a 2-coloring of $[\mathfrak c]^2$
and assume there is no 0-homogeneous set of size $\mathfrak{c}$.
 Then choose a subset $S$ of $\mathfrak{c}$ of cardinality $\mathfrak c$
 as in the statement of (*). Using that $cf(\mathfrak{c}) > \omega$,
we may then easily choose by recursion sets $A = \{\alpha_\xi : \xi < \omega_1\} \subset S$
and $B = \{\beta_\xi : \xi < \omega_1\} \subset \mathfrak{c}$ with $\sup A = \sup B$
such that $(A;B) \subset k^{-1}(1).$
\end{proof}

 \begin{remark} The statement ``$2^{<\mathfrak c}=\mathfrak c  ~\& ~
  (*)$''
 does  have some large cardinal strength.
 To see this, we note that if there is a
 $\mathfrak c$-Souslin tree ordering $\prec$ on $\mathfrak c$, then
 the statement (*)
 fails. To see this let $k$ be the function
 from $[\mathfrak c]^2$  into $\{0,1\}$
 satisfying that $k(s,t)=1$ if and only if $s\prec t$.
Obviously  a $0$-homogeneous set is an anti-chain and
if the set $S $ has cardinality $\mathfrak c$
and satisfies the  1-homogeneous requirement of (*),
 then $S$ is a chain.
It was proven in  \cite{BrRinot2017},
that if
 $\mathfrak c =\kappa = \lambda^+=2^\lambda$
and $\square_\lambda$ holds
then  there is a $\kappa$-Souslin tree $T$.
\end{remark}

We will need this next property of a weakly compact cardinal,
see
 \cite{KunenHandbookLogic}*{Lemma 6.8}.

 \begin{proposition} If $f$ is any function from
   $[\kappa]^{3}$ into a set $S$ of cardinality less $\kappa$,
    then there is a $\kappa$-sized set
  $U\subset \kappa  $ that is homogeneous
    for $f$.
    \end{proposition}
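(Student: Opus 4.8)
The plan is to derive the proposition from the general Ramsey-type partition relation $\kappa \rightarrow (\kappa)^n_\mu$, which holds for every finite $n \geq 1$ and every $\mu < \kappa$ when $\kappa$ is weakly compact; the assertion is exactly the instance $n = 3$ with $\mu = |S|$, after identifying the range $S$ with the cardinal $\mu = |S| < \kappa$. Recall that a weakly compact $\kappa$ is in particular strongly inaccessible and admits, for any suitable transitive model $M$ of size $\kappa$ that is closed under $<\kappa$-sequences, an elementary embedding $j : M \to N$ into a transitive $N$ with critical point $\kappa$. I would fix such an $M$ containing $f$, $\kappa$ and $\mu$, together with such a $j$, noting that $j$ fixes every ordinal $\leq \mu$, so that $j(\mu) = \mu$ and $j(f) : [j(\kappa)]^3 \to \mu$ in $N$.

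The argument would proceed by induction on the exponent, reducing exponent $n+1$ to exponent $n$ via \emph{end-homogeneity}. Call $H \subseteq \kappa$ end-homogeneous for a coloring $g$ of $(n{+}1)$-element sets if the color $g(s)$ of any $s \in [H]^{n+1}$ depends only on its first $n$ elements; on such an $H$ the coloring $g$ induces a coloring $\bar g : [H]^n \to \mu$, and any $\bar g$-homogeneous set is automatically $g$-homogeneous. The base case $n = 1$ is just the pigeonhole principle, using that $\kappa$ is regular and $\mu < \kappa$. Hence it suffices, at each step, to produce an end-homogeneous set of size $\kappa$ and then invoke the inductive hypothesis on $\bar g$; for the proposition this means first building an end-homogeneous $H \in [\kappa]^\kappa$ for the triple-coloring $f$, and then applying the case $n = 2$ to the induced coloring of pairs on $H$.

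To build the end-homogeneous set I would use the embedding to read off the ``target colors'' assigned by the top point $\kappa$: for $s \in [\kappa]^{n}$ set $\mathrm{col}(s) = j(f)(s \cup \{\kappa\}) \in \mu$, and construct an increasing sequence $\langle \alpha_\xi : \xi < \kappa \rangle$ by recursion so that $f(s \cup \{\alpha_\xi\}) = \mathrm{col}(s)$ for every $s \in [\{\alpha_\eta : \eta < \xi\}]^{n}$. At stage $\xi$ the set $P = \{\alpha_\eta : \eta < \xi\}$ has size $<\kappa$, so $P \in M$ and $j$ fixes $P$ pointwise; the requirement then amounts to finding $\gamma < \kappa$ realizing a prescribed color pattern on the $n$-subsets of $P$, and this pattern is witnessed in $N$ by $\gamma = \kappa$, so elementarity should return such a $\gamma < \kappa$ inside $M$. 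The main obstacle is precisely keeping this recursion alive through all $\kappa$ stages: one must check that the number of color constraints imposed at stage $\xi$ stays below $\kappa$ — here the strong-limit character of $\kappa$ is essential, since $|[P]^n|\cdot\mu < \kappa$ — and that the reflected existential statement is genuinely of the form preserved by $j$, i.e.\ that its parameters lie in the range of $j$. Once $H = \{\alpha_\xi : \xi < \kappa\}$ is obtained, end-homogeneity holds by construction, the induced coloring on $H$ has exponent one less, and iterating the two reductions $3 \to 2 \to 1$ completes the proof.
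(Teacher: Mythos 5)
The paper does not actually prove this proposition: it is quoted as a known property of weakly compact cardinals, with a citation to Kunen's Handbook chapter (Lemma 6.8), namely the partition relation $\kappa \rightarrow (\kappa)^3_\mu$ for $\mu < \kappa$. So there is no in-paper argument to compare against; what you have supplied is one of the standard proofs of that cited fact, via the elementary-embedding characterization of weak compactness combined with the reduction of exponent $n+1$ to exponent $n$ through end-homogeneity. Your outline is essentially correct, and the two points you flag as the main obstacles do go through: at stage $\xi$ the set $P$ of previously chosen points and the restricted colour pattern $\mathrm{col}\restriction [P]^n$ both have size $<\kappa$ with all elements below $\kappa$ (or in $\mu$), so by the $<\kappa$-closure of $M$ they lie in $M$ and are fixed by $j$; hence the existential statement witnessed by $\gamma = \kappa$ in $N$ reflects to a $\gamma < \kappa$ in $M$, and transitivity of $M$ makes this a genuine witness in $V$. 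One small inaccuracy: the bound $|[P]^n|\cdot\mu < \kappa$ needs only that $\kappa$ is a regular cardinal exceeding $|P|$ and $\mu$, not its strong-limit character; inaccessibility is used instead to get the closed model $M$ and the embedding in the first place. An alternative, equally standard route (closer to what Kunen does) derives the higher-exponent relations from $\kappa \rightarrow (\kappa)^2_2$ by the same end-homogenization but builds the end-homogeneous set with a tree/ramification argument rather than an embedding; your version buys a cleaner recursion at the cost of invoking the embedding characterization.
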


 The remainder of this section is devoted to proving
 this next theorem.

\begin{theorem} If $\kappa$ is a weakly compact cardinal, and if
 $G$ is a $P_\kappa$-generic filter, then
 (*)      holds
  in $V[G]$.
\end{theorem}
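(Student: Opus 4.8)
The plan is to establish (*) in the extension $V[G]$ by a reflection argument that exploits both the weak compactness of $\kappa$ and the special ``disjoint domains amalgamate'' property of $P_\kappa$ recorded in the last Proposition of Section~3. Fix a name $\dot k$ for an arbitrary coloring $k : [\kappa]^2 \to 2$ and a condition $p_0$ forcing that there is no $0$-homogeneous set of size $\kappa=\mathfrak c$. I would first set up the combinatorial skeleton: for each pair $\{\alpha,\beta\}$ the value $k(\{\alpha,\beta\})$ is decided by some condition, and since $P_\kappa$ has the $\mathfrak c^+$-cc while $\mathfrak c=\kappa$, one can associate to each $\beta<\kappa$ the relevant ``local data'' (a name-fragment together with a condition) in a way that lives in some $H(\theta)$ and is coded by a subset of $\kappa$. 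The goal is to find a single set $S\in[\kappa]^\kappa$ and, for each countable $A\subset S$, a witness $\beta$ above $\sup A$ with $k(\{\alpha,\beta\})=1$ for all $\alpha\in A$.

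The heart of the argument is the weak compactness of $\kappa$, invoked through its $\Pi^1_1$-indescribability / elementary-embedding characterization rather than merely through the partition property quoted above (that quoted $[\kappa]^3_{<\kappa}$ property is the tool for the homogeneity bookkeeping, but the main lift uses reflection). I would take an elementary embedding $j : M \to N$ with critical point $\kappa$, where $M$ is a transitive model containing $P_\kappa$, the name $\dot k$, and the condition $p_0$, and where $j(P_\kappa)=P_{j(\kappa)}$ is the analogous pseudo-product on the $N$-side. The key structural point is that $P_\kappa$ embeds as a complete suborder into $P_{j(\kappa)}$ as computed in $N$, and more importantly that a single ``master'' condition in the quotient can be built because conditions have countable domains: any countable piece of generic information about ordinals in the interval $[\kappa, j(\kappa))$ can be amalgamated with conditions in $P_\kappa$ precisely because of the disjoint-domain amalgamation property. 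This is what allows the ordinal $\kappa$ itself (sitting in $N$ between the ground-model ordinals and $j(\kappa)$) to serve as a generic ``witness'' $\beta$ for the color-$1$ side after reflection back down.

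Concretely, I would argue as follows. Work below $p_0$. Consider the generic object and, in $N[j(G)]$, examine the ordinal $\kappa$ as a potential vertex: for a given countable $A\subset\kappa$ from the ground extension, the statement ``$j(k)(\{\alpha,\kappa\})=1$ for all $\alpha\in A$'' is decided by a countable condition supported on $A\cup\{\kappa\}$, and by amalgamation this condition is compatible with any condition in $P_\kappa$ already deciding $\dot k \restriction [A]^2$. The failure of a color-$0$ homogeneous set of size $\kappa$ (which reflects, via $j$, to a statement about the color-$1$ structure at $\kappa$) forces that $\kappa$ is $1$-joined to a cofinal-in-type-$\omega_1$ family of such countable sets. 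Pulling this back through $j$ and using the $\kappa$-sized homogeneity furnished by the $[\kappa]^3_{<\kappa}$ partition property to iron out the dependence of the witnessing conditions on the countable set, I would extract in $V[G]$ a set $S\in[\kappa]^\kappa$ together with, for each countable $A\subset S$, the required $\beta<\kappa$ above $\sup A$. I expect the main obstacle to be the genericity/reflection bookkeeping: namely verifying that the witness conditions obtained on the $N$-side actually descend to genuine conditions forcing the color-$1$ requirement in $V[G]$ uniformly over all countable $A\subset S$, and that the assignment $A\mapsto\beta$ can be made to cohere. The amalgamation property is exactly the lever that makes this uniformity possible, but marshalling it simultaneously for continuum-many countable subsets, while keeping $S$ of full size $\kappa$, is the delicate step where the weak compactness and the countable-support structure of $P_\kappa$ must be combined with care.
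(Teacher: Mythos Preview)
Your embedding approach diverges sharply from the paper's, and it contains a concrete error at the point where it needs to do work. You assert that for countable $A\subset\kappa$ the statement ``$j(k)(\{\alpha,\kappa\})=1$ for all $\alpha\in A$'' is decided by a condition ``supported on $A\cup\{\kappa\}$'', and that by the disjoint-domain amalgamation property this condition is compatible with conditions already deciding $\dot k\restriction[A]^2$. But the domain of a condition in $P_\kappa$ is a countable set of \emph{coordinates} of the pseudo-product and has no a priori relation to the pair of ordinals whose color it decides; $\dot k$ is an arbitrary name, so a condition forcing $k(\{\alpha,\beta\})=1$ may be supported anywhere in $\kappa$. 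The amalgamation step therefore has no purchase, and with it collapses the claimed compatibility and the subsequent reflection. You also never say what $S$ is, and the sentence ``the failure of a color-$0$ homogeneous set \ldots\ forces that $\kappa$ is $1$-joined to a cofinal-in-type-$\omega_1$ family'' is an assertion, not an argument: absence of large $0$-homogeneous sets places no direct constraint on the colors $j(k)(\{\alpha,\kappa\})$.

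The paper does not use an embedding at all. It fixes for each pair $\{\xi,\eta\}$ that can be forced into color~$1$ a witnessing condition $p(\xi,\eta)$, and then applies the partition relation $\kappa\to(\kappa)^3_{<\kappa}$ to the map sending each triple $\{\xi,\eta,\zeta\}_<$ to the \emph{joint isomorphism type} of $\langle p(\xi,\eta),p(\xi,\zeta),p(\eta,\zeta)\rangle$ together with the order-preserving enumerations of their domains. On the resulting homogeneous set $U$ a structural claim shows that every $p(\xi,\zeta)$ decomposes as $p\cup p_\xi\cup q_\zeta\cup(\text{residual})$, where $p$ is fixed, $p_\xi$ depends only on $\xi$, $q_\zeta$ only on $\zeta$, and the residuals for varying $\xi$ below a fixed $\zeta$ have pairwise disjoint domains. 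The name for $S$ is then $\{(\eta,p_\eta):\eta\in U\}$, and for a ground-model countable $A\subset U$ (properness reduces to this case) one assembles $p$, the $p_\eta$ for $\eta\in A$, $q_\zeta$, and the residuals into a single condition extending every $p(\eta,\zeta)$; a density argument then yields the required $\beta$. In short, the $3$-ary partition property is not a clean-up device after an embedding step---it is the entire engine that manufactures the disjointness you were hoping to get for free.
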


\begin{proof}
Let $\dot E$ be a $P_\kappa$-name of a subset
of $[\kappa]^2 $.  Since $P_\kappa$ is homogeneous,
 it suffices to prove that $1$ does not force that $\dot E$,
 i.e. the characteristic function of $\dot E$ in $[\kappa]^2 $,
 is a counterexample to
 (*).
 Let $H$ denote the set of
 all $\{\xi,\eta\}_<\in [\kappa]^2$  such that there exists a condition
  $p(\xi,\eta)$ that forces that $\{  \xi,\eta\}\in \dot E$.
  We assume with no loss of generality that $\{\xi,\eta\}
 \subset \dom(p(\xi,\eta))$.
  For any $\xi<\eta$ with $\{  \xi,\eta\}\notin H$,
  let $p(\xi,\eta)$ be the set $\{\xi,\eta\}$ (i.e. simply
  anything not in $P_\kappa$).
    \bigskip

 For $\{\xi,\eta\}_<\in H$, let $I(\xi,\eta)$ be the domain
 of $p(\xi,\eta)$. Let $\delta(\xi,\eta)$ denote the order-type
 of $I(\xi,\eta)$ and let $h_{\xi,\eta} $ be the order-preserving
 function from $\delta(\xi,\eta)$ to $I(\xi,\eta)$.
 For any $\{\xi,\eta\}_{<}\in [\kappa]^2\setminus H$,
 let $I(\xi,\eta)$ and $h_{\xi,\eta}$ be the empty set.

 \medskip

 We define a function $f$ on $[\kappa]^3$
 where $f(\{\xi,\eta,\zeta\}_{<})$ is sent to the isomorphism type
 of
 $$\langle p(\xi,\eta), p(\xi,\zeta), p(\eta,\zeta),
  \delta_{\xi,\eta}, \delta_{\xi,\zeta},\delta_{\eta,\zeta},
   h_{\xi,\eta}, h_{\xi,\zeta}, h_{\eta,\zeta}\rangle~.$$
 Fix any $U\subset \kappa$ of size $\kappa$ that is homogeneous
 for $f$.
 \bigskip

 Case 1: there are $\xi < \eta<\zeta<\kappa$ in $U$ such that
  $\{\eta,\zeta\}\notin H$.
  \medskip

  In this case, it is clear that $1$ forces that if $\bar\xi=\min( U)$,
   then $[U\setminus\{\bar\xi\}]^2 \cap H$ is empty, and therefore
   that $1$ forces that $[U\setminus \{\bar\xi\}]^2 \cap \dot E$ is empty.
   Thus, in Case 1, $U$ is a 0-homogeneous set of cardinality $\mathfrak c$.
   \bigskip

 Case 2: $[U]^2 \subset H$.

 \medskip

 Clearly there is a $\delta<\omega_1$ such that for all
  $\xi < \eta\in U$, $\delta_{\xi,\eta}=\delta$.
  Let $\bar\xi = \min(U)$.
  \medskip

We are interested in partitioning
each $I(\xi,\zeta )$ (for $\xi<\eta<\zeta\in U$)
 into the three pieces consisting of $I(\xi,\zeta)\cap \xi$,
 and loosely speaking
  the part of $I(\xi,\zeta)\setminus \xi$ that is close to $\xi$
  and the part that is close to $\zeta$.

   \medskip

For this purpose, fix any $\bar\xi< \eta < \zeta $ all in $U$, and
  let $\beta_1< \delta$ denote the element such that
  $\bar\xi=h_{\bar\xi,\zeta}(\beta_1)$. Similarly
  let $\beta_2<\delta$ be minimal such
  that $\eta \leq h_{\bar\xi,\zeta}(\beta_2)$.
  Finally, let $L=\{\beta\in \delta\setminus \beta_2 :
   h_{\bar\xi,\zeta}(\beta) = h_{\eta,\zeta}(\beta)\}$.
  \medskip

   \begin{claim} For\label{wc1} all $\xi < \eta < \zeta$ in $U$, the
   following statements hold:
   \begin{enumerate}
  \item     $I(\xi,\eta)\subset          h_{\xi,\zeta}(\beta_2)\leq \zeta$,
  \item for all $\beta <\beta_1$, $h_{\xi,\eta}(\beta) = h_{\eta,\zeta}(\beta)$,
  \item for all $\beta_1\leq \beta < \beta_2$, $h_{\xi,\eta}(\beta) =
   h_{\xi,\zeta}(\beta)$,
   \item for all $\beta\in L$, $h_{\xi,\zeta}(\beta) = h_{\eta,\zeta}(\beta)$,
   \item for all $\beta\in  \delta\setminus (\beta_2\cup L)$,
      $h_{\xi,\zeta}(\beta) \notin I(\eta,\zeta)$.
      \end{enumerate}
   \end{claim}

 \begin{proof}
 Since $\kappa$ is weakly compact, there is a
 regular cardinal $\mu\in \kappa$
 such that the order-type of $U\cap \mu$ equals $\mu$
 and such that $I(\xi,\eta)\subset \mu$ for all $\xi<\eta$
 in $U\cap \mu$.  Let $\gamma = \min(U\setminus \mu)$.
\medskip

 Consider any $\xi < \eta$ both in $U\cap \mu$.
   Choose $\zeta \in U\cap \mu$ so that $I(\xi,\eta) \subset \zeta$.
   Since $U$ is homogeneous for $f$, it suffices to prove each of
   the items for the triple $(\xi,\eta,\zeta)$.
Here are the proofs of each item:
\begin{enumerate}
\item     Apply the definition of $\beta_2$ to the triple
    $(\xi,\zeta,\gamma)$, we have that $\zeta\leq h_{\xi,\gamma}(\beta_2)$.
    Jumping to $(\xi,\eta,\gamma)$, we have
    that $I(\xi,\eta) \subset h_{\xi,\gamma}(\beta_2)$.
    Therefore, by homogeneity, we have that $I(\xi,\eta)\subset h_{\xi,\zeta}(\beta_2)$.
\item Fix any $\beta < \beta_1$.  It clearly   follows
from the homogeneity that the sequence $\{ h_{\xi,\eta'}(\beta):  \xi<\eta'\in U\}$
 is monotone increasing. Since $h_{\xi,\eta'}(\beta) <\xi$ for all $\eta'\in U$,
  it follows that the sequence is constant.
  \item Again, for any $\beta_1\leq \beta < \beta_2$,
    the sequence $\{ h_{\xi,\eta'}(\beta) : \xi<\eta'\in U\cap \mu\}$
    is monotone increasing.  On the other hand,  $\beta_2$ is minimal
    such that
    $h_{\xi,\gamma}(\beta_2)$ is greater than $\eta$ and the sequence
    $\{ h_{\xi,\eta'}(\beta) : \xi<\eta'\in U\cap \mu\}$ is bounded
    by $h_{\xi,\gamma}(\beta) < \eta$ and
    so it can not be strictly increasing
    with cardinality $\mu$.
\item The fact that,
for all $\beta\in L$, $h_{\xi,\zeta}(\beta) = h_{\eta,\zeta}(\beta)$
is a direct consequence of homogeneity.
\item Assume that $\beta_2\leq \beta\in \delta$, and
that $h_{\xi,\zeta}(\beta)\in I(\eta,\zeta)$. Choose $\beta_2
\leq\alpha < \delta$ such that $h_{\eta,\zeta}(\alpha) =
 h_{\xi,\zeta}(\beta)$. By homogeneity, applied to
  $\xi<\eta <\gamma$,  it follows that $h_{\xi,\gamma}(\beta)
   = h_{\eta,\gamma}(\alpha)$. Similarly, it follows
   that $h_{\xi,\gamma}(\beta) = h_{\zeta,\gamma}(\alpha)$.
   Therefore, $h_{\eta,\gamma}(\alpha) = h_{\zeta,\gamma}(\alpha)$.
   By homogeneity, it follows that $\alpha\in L$
   and so $h_{\xi,\zeta}(\alpha) = h_{\eta,\zeta}(\alpha)$.
   Since $h_{\xi,\zeta}$ is one-to-one, it finally
   follows that $\beta=\alpha$ and is in $L$.
    \end{enumerate}
This proves the Claim.
   \end{proof}

It follows from item 1 of the Claim, that
there is a condition $p\in P_{\bar\xi}$ that
is equal to $p(\xi,\eta)\restriction \xi$
for
all $\xi<\eta$ in $U$.  It follows from item 3 of the Claim
that, for all $\xi\in U$, there is a condition
   $p_\xi\in P_{\kappa\setminus \xi}$ that is
   equal to $p(\xi,\eta) \restriction \eta\setminus \xi$
   for all $\xi<\eta\in U$. Finally, for all $\bar\xi<\zeta\in U$,
   let $J_\zeta  = h_{\bar\xi,\zeta}[L]$ and
   $q_\zeta = p(\bar\xi,\zeta)\restriction J_\zeta$.
   It follows
   from the definition of $L$ that $q_\zeta$ is
   equal to $p(\xi,\zeta)\restriction J_\zeta$
   for all $\xi\in U\cap \zeta$ and $\zeta\in U$.

\medskip

This next claim follows immediately from item 5 of the previous
Claim.

 \begin{claim} For every $\bar\xi<\zeta\in U$,
  the family $\{  I(\eta,\zeta) \setminus (J_\zeta\cup
    h_{\eta,\zeta}(\beta_2)) : \eta\in U\cap \zeta\}$
    is pairwise disjoint.
    \end{claim}

  Now we can complete the proof of the theorem.  Let
  $G$ be any generic filter with $p\in G$. Let $\dot S$
  be the name $\{ (\eta,p_\eta) : \eta\in U\}$. It
  follows from item 1 of Claim \ref{wc1}  that the domains
  of the family $\{ p_\eta : \bar\xi < \eta \in U\}$ are pairwise
  disjoint, and therefore $G\cap \{ p_\eta : \bar\xi< \eta\in U\}$
  has cardinality $\kappa$.  This proves that $S = \val_{G}(\dot S)$
has cardinality $\kappa$. Also let $E = \val_G(\dot E)$.
Similarly, let $\dot B $ be the name
 $\{ \zeta, q_\zeta) : \zeta\in U\}$ and $B  = \val_G(\dot B)$.
   We also have, by the same
  reasoning as with $S$, that $B$ has cardinality $\kappa$.
\medskip

Since $P_\kappa$ is proper, every countable subset of $S$ is
contained in a countable subset of $U$ from the ground model.
Let $A$ be any countable subset of $U$. We prove
that $(A \cap S;  \{\beta\})\subset E$
for some
  $\beta\in B\setminus\sup(A')$.
 \medskip

 For each $\sup(A ) < \zeta\in U$, let $r(A ,\zeta)$
 equal the union of the conditions
  $\{ p(\eta,\zeta)\restriction (I(\eta,\zeta)\setminus
    (J_\zeta\cup h_{\eta,\zeta}(\beta_2))) : \eta\in A'\}$.
    Since these domains are pairwise  disjoint,
     each $q_\zeta \cup r(A ,\zeta)$ is an element of $P_\kappa$.

     There is certainly a $\kappa$-sized subfamily of
      $\{ q_\zeta\cup r(A ,\zeta) : \sup(A )<\zeta\in U\}$
      such that the domains form a pairwise disjoint family.
      It follows that there is some $\sup(A )<\beta\in U$ such
      that $q_\beta\cup r(A ,\beta)$ is in $G$. It also
      follows that $\beta\in B$.
      \medskip

      Finally let $\tilde A = A \cap S$ in $V[G]$.
      It follows that, for each $\eta\in \tilde A$,
       each of $p, p_\eta$ and $q_\beta\cup r(A ,\beta)$
       are in $G$. Fix any $\eta\in \tilde A$,
        the condition $p\cup p_\eta\cup q_\beta\cup r(A ,\beta)$
        is an extension of $p(\eta,\beta)$. This implies
        that $\{\eta,\beta\}\in     E $ for
        every $\eta\in \tilde A$ as required.
\end{proof}

\section{what if $\mathfrak c$ is rvm? }

In this section we prove that in Solovay's original model,
as in \cite{Solovay}*{Theorem 7},
in which   $\mathfrak c$ is real-valued measurable,
there is a hereditarily Lindel\"of space of weight
greater than $\mathfrak c$.   It is easily seen
that the result also holds if Cohen reals are added in place
of random reals. This result seems interesting for a couple
of reasons. The first is to show that forcing
with a  proper poset that
allows countable supports (as is true for random reals) is
not sufficient to obtain $\mathfrak c \rightarrow
 (\omega_1, (\omega_1;\omega_1))$. The second is that
  $\mathfrak c$ has the tree-property in these models
  (see \cite{Kanamori}*{Theorem 7.12}), and
  so this shows that this is not sufficient either.
\medskip

Let $\kappa$ be any uncountable cardinal and let
$T$ denote the set of all $t\in 2^{<\kappa}$
such that $\omega\subset \dom(t)$.
 Let $\mathcal M_T$ denote the usual measure algebra on
 $2^T$. Let $T^+$ denote all the $\kappa$-branches
 of $T$; we use $T^+$ rather than $2^\kappa$ to make
 it clearer that it refers to ground model branches.
 If $\kappa$ is a measurable cardinal, then
  $\mathfrak c$ will be a real-valued measurable
  cardinal in the forcing extension, but this is
  not needed for the properties of the example.
 \medskip

 For each $\kappa$-branch $\rho\in  T^+$,
 we define the simple name, $\dot W_\rho$,
 for a subset of $\kappa\setminus \omega$.
 For each $\alpha\in\kappa\setminus \omega$,
 the clopen set in $2^T$ given by
 $[(\rho\restriction\alpha,1)]$ (usual notation for
 basic clopen subsets of a product) is equal
 to the Boolean forcing value
  $[[\alpha\in \dot W_\rho]]$ (which has
  measure $\frac 12$; we will use $\nu$ for
  the measure on $\mathcal M_T$).
  \medskip

 Let $G$ be any $\mathcal M_T$-generic filter.
For any $b\in \mathcal M_T$, let $\supp (b)$
denote the countable subset of $T$
called the support of $b$ in the usual sense,
namely that the pull-back of the projection into
 $2^{\supp(b)}$ of $b$ has the same measure
 as $b$.
 \medskip

 The topology $\tau$ on $\kappa\setminus\omega$ is
  generated by declaring $\val_G(\dot W_\rho)$
 to be clopen for every $\rho\in T^+$.
 \medskip

 \begin{lemma} The topology $\tau$ is Hausdorff.
 \end{lemma}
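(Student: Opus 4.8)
The plan is to verify Hausdorffness one pair of points at a time and to reduce it to producing a single basic clopen set. Since each $W_\rho := \val_G(\dot W_\rho)$ is declared clopen, its complement in $\kappa\setminus\omega$ is open; hence to separate two distinct points $\alpha,\beta$ it suffices to produce a single branch $\rho\in T^+$ whose set $W_\rho$ contains exactly one of $\alpha,\beta$. Recalling that a generic filter $G$ determines a point $x_G\in 2^T$ and that, as stated above, $[[\alpha\in\dot W_\rho]]$ is the clopen set $\{x : x(\rho\restriction\alpha)=1\}$, we have $\alpha\in W_\rho$ iff $x_G(\rho\restriction\alpha)=1$. Thus, assuming $\alpha<\beta$, separating $\alpha$ and $\beta$ amounts to finding a branch $\rho$ with $x_G(\rho\restriction\alpha)\ne x_G(\rho\restriction\beta)$, equivalently a pair of nodes $s\subset t$ in $T$ with $\dom(s)=\alpha$, $\dom(t)=\beta$ and $x_G(s)\ne x_G(t)$ (any $\kappa$-branch $\rho\supset t$ then works, and has all initial segments of domain $\ge\omega$ in $T$, so $\rho\in T^+$).

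Next I would show that the set $D_{\alpha,\beta}$ of conditions forcing the existence of such a pair $(s,t)$ is dense; then, $\alpha$ and $\beta$ being ground-model ordinals, the separation holds in $V[G]$ for every pair. Fix a positive $b\in\mathcal M_T$. Because $\supp(b)$ is countable while there are at least $2^{\aleph_0}$ nodes of domain $\alpha$ (here $\alpha\ge\omega$, so $|2^\alpha|\ge 2^{\aleph_0}$, and each lies in $T$), we may choose $s\in T$ with $\dom(s)=\alpha$ and $s\notin\supp(b)$, and then any extension $t\supset s$ with $\dom(t)=\beta$. Since $s\notin\supp(b)$, the coordinate $x(s)$ is independent of $b$ and of $x(t)$. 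Hence at least one of $b\wedge[[x(t)=0]]$, $b\wedge[[x(t)=1]]$ is positive, say $b\wedge[[x(t)=j]]$ for some $j\in\{0,1\}$; intersecting it with $[[x(s)=1-j]]$ — which only multiplies the measure by $\tfrac12$, by independence — yields a positive $b'\le b$ forcing $x(s)\ne x(t)$. Taking any $\rho\in T^+$ extending $t$, $b'$ forces that $W_\rho$ contains exactly one of $\alpha,\beta$, so $b'\in D_{\alpha,\beta}$. This proves $D_{\alpha,\beta}$ dense.

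Finally, since $D_{\alpha,\beta}$ meets $G$ for each pair $\alpha\ne\beta$ in $\kappa\setminus\omega$, we obtain in $V[G]$ for each such pair a branch $\rho$ with exactly one of $\alpha,\beta$ in $W_\rho$; then $W_\rho$ and its complement are disjoint open sets separating them, and so $\tau$ is Hausdorff. The step I would be most careful about is the use of the support: one must pick the node $s$ of the \emph{lower} level outside $\supp(b)$, so that $x(s)$ remains a free coordinate that can be set opposite to whatever value of $x(t)$ the condition $b$ may already constrain. Choosing $t$ outside $\supp(b)$ instead is neither always possible (e.g.\ when $\beta=\alpha+1$, where $s$ has only two extensions) nor necessary, which is why the argument routes the independence through the lower node $s$.
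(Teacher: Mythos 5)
Your proof is correct and follows essentially the same route as the paper: a density argument using the countability of $\supp(b)$ to find a branch with a coordinate free over $b$, and then extending $b$ to force the two points into different sides of some $\dot W_\rho$. The only (harmless) difference is that the paper chooses $\rho$ so that $\rho\restriction\omega$ already avoids $\{\psi\restriction\omega : \psi\in\supp(b)\}$, making both nodes $\rho\restriction\alpha$ and $\rho\restriction\beta$ free coordinates simultaneously, whereas you free only the lower node $s$ and case-split on the value $b$ may impose at the upper node $t$.
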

 \begin{proof} Consider any $b\in G$ and
 any distinct $\omega\leq\alpha < \beta<\kappa$.
 Choose   any   $\rho \in T^+$ satisfying
   that $ \rho \restriction\omega $ is not in the set
     $\{ \psi\restriction\omega : \psi\in \supp (b)\}$.
     Of course it follows that $\{\rho\restriction\alpha,
     \rho \restriction\beta\}$ is disjoint from
     the $\supp(b)$.  Now take the condition
     $[ (\rho\restriction\alpha,0)]\cap [(\rho\restriction\beta,1)]$,
      (i.e. the condition $[[\dot W_{\rho}\cap \{\alpha,\beta\}
       = \{\beta\}]]$).  Clearly $b$ meets
       $[[\dot W_{\rho}\cap \{\alpha,\beta\}
       = \{\beta\}]]$ in a set of positive measure,
       and this proves $b$ does not force
       that $\alpha$ and $\beta$ can not be separated
       by a clopen set.
 \end{proof}

 \begin{definition}  For a finite set $F\subset T^+$ and
 a function $\sigma\in 2^F$, we let
  $\dot W_\sigma$ denote the basic clopen set
   $\bigcap \{ \dot W_\rho : \sigma(\rho)=1\}
   \setminus \bigcup\{ \dot W_\rho : \sigma(\rho) =0\}$.\\
   Let $\mbox{Fn}(T,2)$ denote the set of all such finite functions.
 \end{definition}

\begin{lemma}   There is no uncountable left-separated
subspace of $\kappa\setminus \omega$ with respect to the
topology $\tau$.
\end{lemma}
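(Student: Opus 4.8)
The plan is to argue by contradiction in the ground model $V$, working with names over the measure algebra $\mathcal M_T$. Suppose some condition forces that $\{\dot\alpha_\xi : \xi<\omega_1\}$ is left separated in $(\kappa\setminus\omega,\tau)$, witnessed at stage $\xi$ by a basic clopen neighbourhood $\dot W_{\sigma_\xi}$ of $\dot\alpha_\xi$ that misses all earlier points, where $\sigma_\xi$ is a finite partial function into $2$ with domain $F_\xi\subset T^+$. Since measure algebras are ccc, for each $\xi$ I can find a condition $b_\xi$ with $\nu(b_\xi)>0$ deciding $\dot\alpha_\xi$ to be a fixed ordinal $a_\xi$ and $\sigma_\xi$ to be a fixed $s_\xi$; this $b_\xi$ then forces both $a_\xi\in\dot W_{s_\xi}$ and $\dot\alpha_\zeta\notin\dot W_{s_\xi}$ for every $\zeta<\xi$. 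A pigeonhole on $\omega_1$ yields an uncountable $I$ and an $\varepsilon>0$ with $\nu(b_\xi)>\varepsilon$ and $|F_\xi|$ constant for $\xi\in I$. I then apply the $\Delta$-system lemma twice: once to the countable node-supports $\supp(b_\xi)\subset T$, giving a root $S^\ast$ and pairwise disjoint petals, and once to the finite branch-sets $F_\xi\subset T^+$, giving a root $R$ on which all $s_\xi$ agree, again with pairwise disjoint petals. Finally I thin $I$ so that $\xi\mapsto a_\xi$ is strictly increasing, so that each $a_\xi$ exceeds all of the (finitely many) split levels of the discordant branch-pairs inside $R$, and so that no $a_\xi$ occurs as the length of a node of $S^\ast$.

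The engine of the contradiction is the following feature of the construction: whether a point $\alpha$ lies in $\dot W_\rho$ is decided by the single coordinate $\rho\restriction\alpha$, which sits at \emph{level} $\alpha$ of the tree $T$. Hence the membership of $\alpha$ in any subbasic set reads only level-$\alpha$ coordinates, and distinct points consult pairwise disjoint, hence independent, blocks of coordinates. Consequently, for $\zeta<\xi$ in $I$, the forcing value of $a_\zeta\notin\dot W_{s_\xi}$ is the event $\{x:\exists\rho\in F_\xi,\ x(\rho\restriction a_\zeta)\ne s_\xi(\rho)\}$, governed entirely by the level-$a_\zeta$ nodes $\{\rho\restriction a_\zeta:\rho\in F_\xi\}$. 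I claim that $b_\xi$ can force this event only if one of two things happens: either two branches of $F_\xi$ already collide at level $a_\zeta$ with conflicting $s_\xi$-values — an \emph{automatic} exclusion, which (since $a_\xi\in\dot W_{s_\xi}$ forces those two branches to differ at level $a_\xi$) requires their split level to lie in $[a_\zeta,a_\xi)$ — or else $\supp(b_\xi)$ meets level $a_\zeta$. Indeed, if neither holds then the level-$a_\zeta$ nodes are mutually consistent and disjoint from $\supp(b_\xi)$, so the complementary (membership) event has positive measure $2^{-m}$ and is independent of $b_\xi$; thus $\nu(b_\xi\cap\{x:\forall\rho\in F_\xi,\ x(\rho\restriction a_\zeta)=s_\xi(\rho)\})=\nu(b_\xi)\cdot 2^{-m}>0$, contradicting that $b_\xi$ forces $a_\zeta\notin\dot W_{s_\xi}$.

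It therefore suffices to produce a single pair $\zeta<\xi$ in $I$ for which the exclusion at stage $\xi$ is neither automatic nor supported. The thinning already rules out automatic collisions coming from the root $R$, so any automatic collision at level $a_\zeta$ must involve a petal branch of $F_\xi$ splitting at a level in $[a_\zeta,a_\xi)$; and a supported exclusion requires the petal part of $\supp(b_\xi)$, which is disjoint from those of the other conditions, to contain a node of length exactly $a_\zeta$. The plan is to run one further recursion selecting an uncountable $J\subset I$: at stage $\xi$ I pick a member of $I$ whose support-petal misses the countably many levels $a_\zeta$ chosen so far and whose domain-petal branches split off low enough that they cannot collide at those levels, thereby defeating both failure modes for the chosen $\zeta<\xi$. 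I expect \emph{this} selection to be the main obstacle: the automatic-exclusion mode is driven by the \emph{split levels} of the petal branches rather than by their supports, so defeating it together with the support-meets-level-$a_\zeta$ mode requires a combined tree-and-support $\Delta$-system argument exploiting the disjointness of both families of petals and the fact that the levels $a_\xi$ are increasing and can be spread out. Once such a pair is found, the measure computation of the previous paragraph closes the argument.
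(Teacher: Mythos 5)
Your plan founders on exactly the failure mode you flag as ``the main obstacle,'' and that obstacle cannot be engineered away, because the statement taken literally --- with \emph{left}-separated, under the paper's own convention that a left-separating neighbourhood excludes all \emph{earlier} points --- is false in this model. The ``automatic exclusion'' mechanism is a genuine source of counterexamples: having chosen countably many points, pick two ground-model branches $\rho_0,\rho_1$ that agree up to a level $\gamma$ above all of them and first split at $\gamma$, and set $\sigma(\rho_0)=0$, $\sigma(\rho_1)=1$. For $\alpha\le\gamma$ the two membership events read the \emph{same} coordinate $\rho_0\restriction\alpha=\rho_1\restriction\alpha$ with opposite demands, so $\dot W_\sigma$ is disjoint from $[\omega,\gamma]$ outright; for $\alpha>\gamma$ the events are independent of probability $1/4$, so $\dot W_\sigma$ is forced to be nonempty above $\gamma$. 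Recursing for $\omega_1$ steps yields an uncountable left-separated subspace. What the paper's proof actually establishes is the \emph{right}-separated version (equivalently hereditary Lindel\"ofness, which is what the section needs): it shows that for some $\delta$ the whole set $\dot W_{\dot\sigma_\delta}$ is covered by $\bigcup_{\xi<\delta}\dot W_{\dot\sigma_\xi}$, putting a point of index $\delta$ inside an earlier neighbourhood --- a contradiction with right-, not left-, separation. In that reading your automatic-exclusion mode vanishes for a good reason: since $b_\xi$ forces $a_\xi\in\dot W_{s_\xi}$, any two branches of $F_\xi$ carrying conflicting $s_\xi$-values are already distinct at level $a_\xi$, hence at every higher level $a_\eta$ you would need to read.

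Even after reorienting to the right-separated statement, two substantive gaps remain. First, your closing computation contradicts ``$b_\xi$ forces $a_\zeta\notin\dot W_{s_\xi}$,'' but $b_\xi$ only forces $\dot\alpha_\zeta\notin\dot W_{s_\xi}$; the value $a_\zeta$ is decided by $b_\zeta$, so you must work below $b_\xi\wedge b_\zeta$, show that meet is nonzero, and make the membership event independent of $\supp(b_\zeta)$ as well --- and $\supp(b_\zeta)$ naturally contains nodes at level $a_\zeta$. Second, a coordinate $\rho\restriction a_\eta$ with $\rho\in F_\xi$ may coincide with a coordinate $\rho'\restriction a_\eta$ with $\rho'\in F_\eta$ on which $b_\eta$ has already committed a conflicting value, and a $\Delta$-system on the branch sets does not detect this, since distinct branches can agree to arbitrarily high levels. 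This is precisely what the paper's machinery is built to control: it reflects into a countable elementary submodel $M$ with $\delta=M\cap\omega_1$, splits the branches of $F_\delta$ according to whether their restrictions to $\bar\beta=\min(M\setminus\beta)$ lie in $M$, handles the reflected part by finding indices in $M$ with the same restriction pattern, and handles the unreflected part by the join-stabilization claim $\bigvee_k\bigl(b_{\xi_k}\wedge a_k\bigr)=\bigvee_k b_{\xi_k}$. Your purely combinatorial pair-selection has no substitute for that last step, which is what guarantees that an \emph{arbitrary} extension $b'$ meets one of the good conditions.
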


 \begin{proof}  Let, for $\xi\in\omega_1$,
  $\dot \sigma_\xi$ be any name of an element of
   $\mbox{Fn}(T,2)$. It suffices
   to prove that
   $1$ forces that there is some $\zeta\in\omega_1$
   such that $\dot W_{\dot\sigma_\zeta}$ is contained
   in the union of the family
      $\{ \dot W_{\dot\sigma_\xi} : \xi  < \delta\}$;
      which we now do.
      \medskip

First consider any $0<b\in \mathcal M_T$. We prove
there is an extension $b'$ of $b$ and a $\zeta\in\omega_1$
such that $b'$ forces that $\dot W_{\dot \sigma_\zeta}$
is contained
  in the union of the family
      $\{ \dot W_{\dot\sigma_\xi} : \xi  < \delta\}$.

 In the forcing extension  (with $b$ in the generic) we
 can pass to an uncountable subset (and thus re-index)
  so that the family $\{   F_\xi : \xi \in\omega_1\}$,
  where $ F_\xi = \dom (  \sigma_\xi)$,
is a $\Delta$-system and there is a single $  \sigma
\in 2^{<\omega}$
such that each $ F_\xi$ has cardinality equal
to $n= \dom( \sigma)$
 and  the order-preserving function from $n$
 to $F_\xi$ (with the lexicographic ordering)
 sends $\sigma$ to $\sigma_\xi$.

 By extending $b$ we can assume that $b$ decides
 the values of $n,\sigma$ and the root of the
 $\Delta$-system $F$. For every $\xi\in\omega_1$
 (again for the re-indexed subcollection)
 choose an extension $b_\xi $ of $b$ that
 forces $\dot F_\xi = F_\xi$ for some
 $F_\xi \in [T]^n$.
 Notice that, because $\mathcal M_T$ is ccc,
 for each $\xi$, there is a
  $\xi< \eta<\omega_1$ such that $b$
  forces that $F_\xi\setminus F$ is
  disjoint from $\dot F_\zeta$ for all
  $\eta\leq \zeta <\omega_1$.
\medskip

  Therefore, by again passing to an uncountable
  re-indexed subsequence, we can
  assume that $\{ F_\xi : \xi\in\omega_1\}$
  is a $\Delta$-system with root $F$. Additionally,
   since $\mathcal M_T$ is ccc, we can assume
   that, for every $\eta<\omega_1$,
   the join of the family $\{ b_\xi : \eta < \xi\in\omega_1\}$
   is equal to the join of the entire family
    $\{ b_\xi : \xi\in\omega_1\}$.

  Let the family $\{ b_\xi , F_\xi : \xi\in\omega_1\}$
  be an element of a countable elementary submodel
    $M$ of $H((2^\kappa)^+)$.  Let $M\cap\omega_1 = \delta$.
 Now take any $\beta\in\kappa$
    and extension $b'$
     of $b\wedge b_\delta \wedge [[\beta\in \dot W_{\sigma_\delta}]]$.
    We prove, and by genericity this suffices, that
      $b'$ \textbf{does not   force\/}
      that $\beta\notin
       \bigcup \{ \dot W_{\sigma_\xi} : \xi\in \delta\}$.

     \medskip

First we consider the case where   $\beta\in M$.
Choose any generic filter $G$ with $b'\in G$.
 Now   it is a simple application of
the fact that $M[G]$ is an elementary submodel of
  $H(\theta)[G]$ to deduce that there is a $\xi\in M\cap \delta$
   such that $ \beta\in \val_G(\dot W_{\rho_\xi} )$.

\medskip

     Otherwise, let    $\bar\beta$
     be the minimal element of $M\cap [0,\kappa]\setminus \beta$.
It follows that $\bar\beta$ has uncountable cofinality since
$\beta$ is a witness to the fact that $M\cap \bar\beta$
is not cofinal in $\bar\beta$. For each $\xi\leq \delta$,
let $F_\xi = \{ \rho^\xi_i : i < n   \}$ be   listed in lexicographic
order.
Let $L=\{ i : \rho^\delta_i {\restriction} \bar\beta \in M\}$.
Of course $F\subset \{ \rho^\delta_i : i\in L\}$.
\medskip

We observe that $b'\leq b_\delta\wedge [  (\rho^\delta_i\restriction\beta , \sigma(i))]$
for each $i<n$.
Let $L_0 = \sigma^{-1}(0)$ and $L_1 = \sigma^{-1}(1)$.
   Since $b'$ forces $\beta\in \dot W_{\sigma_\delta}$,
   it follows that
    $\{ \rho^\delta_i\restriction \beta : i\in L_0\}$
    is disjoint from
     $\{ \rho^\delta_i \restriction \beta : i\in L_1\}$.
     Since $\beta <\bar\beta$, this implies
     that
$\{ \rho^\delta_i\restriction \bar\beta : i\in L_0\}$
    is disjoint from
     $\{ \rho^\delta_i \restriction \bar\beta : i\in L_1\}$.

Let $ I$ be the set of $\xi\in \omega_1$ such that
\begin{enumerate}
\item $
 \{ \rho^\delta_i  \restriction \bar\beta : i\in L\}
 = \{ \rho^\xi_i \restriction \bar\beta : i\in L\} ~\}$, and
 \item for each $i,j\in n$,
   $\rho^\xi_i\restriction\bar\beta = \rho^\xi_j\restriction\bar\beta$,
   if and only if
   $\rho^\delta_i\restriction\bar\beta = \rho^\delta_j\restriction\bar\beta$.
 \end{enumerate}

 Notice that $I\in M$ and that $\delta\in I$, and
 so $I$ is uncountable. Since $\mathcal M_T$ is ccc,
  there is an $\eta<\omega_1$ such  that the join
  of $\{b_{\xi} : \eta < \xi\in I\}$ is equal to
  the join of $\{ b_{\xi} : \zeta < \xi\in I\}$ for
  all $\eta\leq \zeta\in\omega_1$. By elementarity
  there is such an
   $\eta$  in $M$.
 In addition, by elementarity, for every $\eta<\eta'\in M\cap \delta$,
  the join of the family $\{ b_\xi : \eta'< \xi\in I\cap M\}$
  is equal to the join of the family $\{ b_\xi : \eta<\xi\in I\}$.

 \medskip

Clearly
  $\{ \rho^\delta_i \restriction \bar\beta
  : i\in n\setminus L\}$ is disjoint from $M$.
Therefore,  working in $M $,
we can
 choose $\{\xi_k : k\in \omega\}\subset   I$ and in $M$
 such that, $\{ \rho^{\xi_k}_i\restriction \bar\beta : i\in n\setminus L\}$
 is disjoint from $\{ \rho^{\xi_j} _i \restriction \bar\beta : i < n\}$
 for all $j< k$. We can simultaneously ensure that
 the measure of the join of every co-finite subset
 of $\{ b_{\xi_k} : k\in\omega\}$
 is equal to the join of the family
 $\{ b_\xi : \xi\in I\}$. Similarly, there is a
 partition, $\{ A_\ell : \ell\in \omega\}\subset M$
 of $\omega$ into infinite sets
 such that, for each $\ell$,
 the join of $\{ b_{\xi_k} : k\in A_\ell\}$
 is equal to the join of $\{ b_{\xi_k} : k\in\omega\}$.

 Let, for  $k\in\omega$,   $a_k$ denote the meet
 of the conditions $\{ [ (\rho^{\xi_k}_i {\restriction} \beta,
   \sigma(i) ] : i\in n\setminus L\}$.  There is some $m\leq
   |n\setminus L|$
   such that each $a_k$ has measure $2^{-m}$.
   Also notice
   that the support of each $b_{\xi_k}$ is contained in $M$
   and is therefore disjoint from the support of each $a_{k'}$.

\bgroup
\def\proofname{Proof of Claim}

\begin{claim} For every $c\in M\cap \mathcal M_I$ and finite
 $H\subset \omega$, the measure of  $\bigvee\{ c\wedge a_k : k\in H\}$
 is equal to $\nu(c)\cdot  (1-2^{-m})^{|H|}$.
\end{claim}

\begin{proof}  Since $c$ is in $M$, its support is disjoint from
the support of $a_k$ for all $k\in H$.
The join of $\{ c\wedge a_k : k\in H\}$ is equal to
$c \wedge \bigwedge \{ 1-a_k : k\in H\}$.
The measure of $c \wedge \bigwedge\{ 1-a_k : k\in H\}$
is equal to $\nu(c)\cdot \nu(\bigwedge \{ 1-a_k : k\in H\})$.
The measure of $\bigwedge \{ 1-a_k : k\in H\}$ is equal
to $(1 - 2^{-m})^{|H|}$.
\end{proof}

   \begin{claim}
    The join of the conditions
      $\{ b_{ \xi_{k} }\wedge a_{k  }: k\in \omega\}$
      is equal to the join of
       $\{ b_{\xi_k} : k\in\omega\}$.
   \end{claim}

\begin{proof}
Let $\tilde b = \bigvee \{ b_{\xi_k} : k\in\omega\}$.
   It suffices to prove
  that $\bigvee \{ b_{\xi_{k}}\wedge a_k : k\in \omega\}$
  has measure greater   than $ \nu(\tilde b)\cdot  (1-2^{-m})^{\ell}$
  for each $\ell\in \omega$.
  For each $i\leq \ell$ and each $j\in A_i$,
  $c(i,j) = b_{\xi_{ j }}
  \setminus \bigvee\{ b_{\xi_{ j' }} : j' \in A_i\cap  j\}$.
Thus
  the family $\{ c(i,j) : j\in A_i \}$ is a partition
  of $\tilde b$ into measurable sets.
   Clearly, for each $i\in \omega$
   and $j\in A_i$,
   $c(i,j)\wedge a_j \leq  b_{\xi_j}\wedge a_j$.

   Now let $\mathcal C$ be the family of all
   atoms below $\tilde b$ of the Boolean algebra generated by
   $\bigcup \{ c(i,j) : i\leq \ell, \ j\in A_i\}$.
   In other words, the family $\mathcal C$ is a disjoint refinement
   of $\{ c(i,j) : j\in  A_i  \}$ for every $i\leq \ell$.
   In addition, $\bigvee \mathcal C = \tilde b$.
   For each $c\in \mathcal C$, let $h_c $ be a function
   with domain $\ell+1$ into $\omega$
   satisfying that $h_c(i) = j$ implies that
     $ c \leq c(i,j)$.

     It follows that, for each $c\in \mathcal C$,
      $\bigvee \{ c\wedge a_{h_c(i)}: i \leq\ell\}$
      is less than or equal to
        $\bigvee \{ b_{\xi_k} \wedge a_k : k\in \omega\}$.

The measure of  $\bigvee \{ c\wedge a_{h_c(i)}: i \leq\ell\}
=c \wedge \bigvee_{i\leq\ell} a_{h_c(i)}
$
 is greater than $\nu(c) \cdot (1-2^{-m})^\ell$.
 The measure of $\bigvee \mathcal C$ is $\nu(\tilde b)$
 and so the measure of $\bigvee
 \{ c \wedge \bigvee_{i\leq\ell} a_{h_c(i)} : c\in \mathcal C\}$
 is greater than $\nu(\tilde b)\cdot (1-2^{-m})^\ell$.
 Finally, since  $\bigvee
 \{ c \wedge \bigvee_{i\leq\ell} a_{h_c(i)} : c\in \mathcal C\}$
is less than $\bigvee \{ b_{\xi_k} \wedge a_k : k\in\omega\}$,
this completes the proof of the Claim.
\end{proof}

\egroup

   To complete the proof of the theorem, we just note
   that $  b_{\delta} \leq \bigvee \{ b_{\xi_k} \wedge a_{ k}
   : k\in \omega\}$
and that $r'\leq b_{\delta} \wedge\bigwedge \{ [(\rho^\delta_i, \sigma(i))] :
 i\in L\}$.
   So we may choose $k\in\omega$
   such that $r'\wedge b_{\xi_k}\wedge a_k > 0$.
   This condition
   forces that $ \sigma_{\xi_k} = \dot \sigma_{\xi_k}$ and
    $\beta\in \dot W_{\sigma_{\xi_k}}$.
 \end{proof}

\begin{bibdiv}
\begin{biblist}

      \bib{BrRinot2017}{article}{
   author={Brodsky, Ari Meir},
   author={Rinot, Assaf},
   title={A microscopic approach to Souslin-tree constructions, Part I},
   journal={Ann. Pure Appl. Logic},
   volume={168},
   date={2017},
   number={11},
   pages={1949--2007},
   issn={0168-0072},
   review={\MR{3692231}},
   doi={10.1016/j.apal.2017.05.003},
}

\bib{FuchinoShelahSoukup}{article}{
   author={Fuchino, Saka\'e},
   author={Shelah, Saharon},
   author={Soukup, Lajos},
   title={Sticks and clubs},
   journal={Ann. Pure Appl. Logic},
   volume={90},
   date={1997},
   number={1-3},
   pages={57--77},
   issn={0168-0072},
   review={\MR{1489304}},
   doi={10.1016/S0168-0072(97)00030-4},
}

\bib{MR0229195}{article}{
   author={Hajnal, A.},
   author={Juh\'asz, I.},
   title={Discrete subspaces of topological spaces},
   note={Nederl. Akad. Wetensch. Proc. Ser. A {\bf 70}},
   journal={Indag. Math.},
   volume={29},
   date={1967},
   pages={343--356},
   review={\MR{0229195}},
}

\bib{MR0576927}{book}{
   author={Juh\'asz, Istv\'an},
   title={Cardinal functions in topology---ten years later},
   series={Mathematical Centre Tracts},
   volume={123},
   edition={2},
   publisher={Mathematisch Centrum, Amsterdam},
   date={1980},
   pages={iv+160},
   isbn={90-6196-196-3},
   review={\MR{0576927}},
}

\bib{JvMSSz}{article}{
   author={Juh\'asz, I.},
   author={van Mill, J.},
   author={Soukup L.},
   author={Szentmikl\'ossy, Z.},
   title={Some new results on $\Delta$-spaces},
   doi={https://arxiv.org/pdf/2510.04242}
}

\bib{MR0852486}{article}{
   author={Juh\'asz, I.},
   author={Shelah, S.},
   title={How large can a hereditarily separable or hereditarily Lindel\"of
   space be?},
   journal={Israel J. Math.},
   volume={53},
   date={1986},
   number={3},
   pages={355--364},
   issn={0021-2172},
   review={\MR{0852486}},
   doi={10.1007/BF02786567},
}

\bib{Kanamori}{book}{
   author={Kanamori, Akihiro},
   title={The higher infinite},
   series={Perspectives in Mathematical Logic},
   note={Large cardinals in set theory from their beginnings},
   publisher={Springer-Verlag, Berlin},
   date={1994},
   pages={xxiv+536},
   isbn={3-540-57071-3},
   review={\MR{1321144}},
}

\bib{KunenHandbookLogic}{article}{
   author={Kunen, Kenneth},
   title={Combinatorics},
   conference={
      title={Handbook of Mathematical Logic},
   },
   book={
      series={Stud. Logic Found. Math.},
      volume={90},
      publisher={North-Holland, Amsterdam},
   },
   isbn={0-7204-2285-X},
   date={1977},
   pages={371--401},
   review={\MR{3727412}},
}

\bib{MR2220104}{article}{
   author={Moore, Justin Tatch},
   title={A solution to the $L$ space problem},
   journal={J. Amer. Math. Soc.},
   volume={19},
   date={2006},
   number={3},
   pages={717--736},
   issn={0894-0347},
   review={\MR{2220104}},
   doi={10.1090/S0894-0347-05-00517-5},
}
\bib{Solovay}{article}{
   author={Solovay, Robert M.},
   title={Real-valued measurable cardinals},
   conference={
      title={Axiomatic Set Theory},
      address={Proc. Sympos. Pure Math., Vol. XIII, Part I, Univ.
      California, Los Angeles, Calif.},
      date={1967},
   },
   book={
      series={Proc. Sympos. Pure Math.},
      volume={XIII, Part I},
      publisher={Amer. Math. Soc., Providence, RI},
   },
   date={1971},
   pages={397--428},
   review={\MR{0290961}},
}

        \bib{TodorcevicTAMS}{article}{
   author={Todor\v cevi\'c, Stevo},
   title={Forcing positive partition relations},
   journal={Trans. Amer. Math. Soc.},
   volume={280},
   date={1983},
   number={2},
   pages={703--720},
   issn={0002-9947},
   review={\MR{0716846}},
   doi={10.2307/1999642},
}
		\end{biblist}
		\end{bibdiv}

\end{document}